\documentclass[journal=jacsat,manuscript=article]{achemso}

\usepackage{chemformula} 
\usepackage[T1]{fontenc} 

\usepackage{amsmath,amssymb,amsfonts}
\usepackage{graphicx}
\usepackage[english]{babel}
\usepackage{amsthm}
\usepackage{enumitem}

\newtheorem{remark}{Remark}

\newtheorem{theorem}{Theorem}

\newtheorem{lemma}[theorem]{Lemma}
\usepackage{multirow}
\usepackage{easyReview}
\renewcommand{\alert}[1]{#1}
\usepackage{caption}
\usepackage{subcaption}

\newcommand{\Real}{\mathbb R}

\SectionNumbersOn
\author{Chenchen Zhou}
\author{Hongxin Su}
\author{Xinhui Tang}
\author{Yi Cao}
\author{Shuang-hua Yang}
\email{yangsh@zju.edu.cn}
\affiliation[Zhejiang University]
{College of Chemical and Biological Engineering, Zhejiang University, Hangzhou, Zhejiang P. R. China}
\alsoaffiliation[quzhou]
{Institute of Zhejiang University-Quzhou, Quzhou, Zhejiang Province,  P.R. China}

\title[An \textsf{achemso} demo]
  {Global Self-Optimizing Control of Batch Processes}

\abbreviations{Re-gSOC}
\keywords{Self-optimizing control, Controlled variable, Dynamic optimization, Batch process}

\begin{document}

%
%
%
%
%

\begin{abstract}
	 This work considers to achieve near-optimal operation for a class of batch processes by employing self-optimizing control (SOC). Comparing with a continuous one, a batch process exhibits stronger nonlinearity with dynamics because of the non-steady operation condition. This necessitates a global version of SOC to achieve satisfactroy performance. Meanwhile, it also makes the existing global SOC (gSOC) not directly applicable to batch processes due to the causality amongst variables. Therefore, it is necessary to extend the original gSOC to batch processes. In addition to the nonconvexity challenge of the original gSOC problem, the new extension for batch processes has to face even more challenges. Particularly, the causality due to dynamics of batch processes brings in structural constraints on controlled variables (CVs), making a CV selection problem even more difficult. To address these challenges, the gSOC problem is recast in a vectorized formulation and it is proved that the structural constraints considered are linear in the vectorized formulation. Moreover, a novel shortcut method is proposed to efficiently find sub-optimal but more transparent solutions for this problem. 
	 The effectiveness of the new approach is validated through a case study of a fed-batch reactor, where CVs are constructed through a combination matrix with a repetitive structure, resulting in a simple SOC scheme. This simplicity facilitates the implementation of the SOC approach and enhances its practical applicability and robustness.

\end{abstract}

\pagebreak
\section{Introduction}

Batch and semi-batch processes are prevalent in the contemporary chemical industry for low-volume, but high-value-added products such as fine chemicals and polymers. The growing demand for product diversity as the market digitalizing has led to increased attention on the optimization of flexible operation of batch processes. 
However, the strictly high-quality standards associated with high-value-added products together with wide uncertainties in flexible operation conditions make the problem extremely challenging\cite{Yoo2021}.
Besides, the inherent dynamic nature of batch processes also increases the difficulty of achieving optimal operation. In most batch processes, a steady-state is never reached. This indicates that the trajectory of a batch process may span across a wide operational space, which means not only dynamics but also nonlinearities have to be taken into account for batch process control and optimization.

There are two main types of online optimization technologies for batch processes, namely run-to-run/batch-to-batch optimization and within-batch optimization. The former is based on the repetitiveness of the batch process to iteratively update batch-to-batch operations. Optimization actions for the next batch are carried out upon the completion of the previous batch, aiming to achieve incremental performance improvements throughout the iterations.
Iterative Learning Control (ILC) is a widely recognized method in this field.\cite{bristow2006,wang2021}
However, a shortcoming of this type of approaches is its failure to account for uncertainties that arise within a single batch process. \cite{zhang2004} As a result of such uncertainties, the behavior of the same process can vary from one batch to another, rendering knowledge learned from one batch inapplicable to another. This limitation makes ILC approaches unsuitable for such processes.

The latter type of optimization involves executing the optimizing control during batch operations to address the uncertainties that arise within the current batch period. Two types of schemes are commonly used for within-batch optimization: explicit schemes and implicit schemes.\cite{srinivasan2003}. Explicit schemes follow a two-step approach: firstly, parameter estimation is used to update the model, and then the updated optimization problem is repeatedly solved online to update the remaining batch operation.\cite{bristow2006}
One limitation of this approach is its reliance on the accuracy of the model, as a failure to update the model can potentially compromise the performance. However, conflicts may arise between model parameter estimation and process optimization.
Parameter estimation requires a persistent excitation, implying that inputs need to sufficiently vary to uncover unknown parameters. This requirement is often unacceptable when inputs are near optimal \cite{bristow2006}.
On the other hand, implicit schemes utilize measurements to directly adapt the inputs. Necessary Conditions of Optimality Tracking (NCO Tracking) \cite{srinivasan2003} technology is a widely utilized technology for batch process optimization.
NCO tracking has to deal with active constraints and plant economic sensitivities together. Active constraints need to be held constant and sensitivities within the remaining degrees of freedom need to be maintained at zero. {However, the sensitivities, typically gradients of a cost function, have to be evaluated online, which can be time-consuming \cite{Ye2022b}.}
Another promising method in implicit schemes is self-optimizing control (SOC).\cite{Skogestad2000, Jaschke2017}
SOC aims to achieve satisfactory performance by selecting controlled variables (CVs) off-line but maintaining them at constant set-points online. It effectively translates economic objectives into process control objectives, enabling regulatory feedback controllers to restore operational optimality.\cite{Morari1980}
In contrast to NCO, SOC indirectly achieves optimality through feedback control of CVs selected off-line \cite{gros2009a}, eliminating the need for online gradient evaluations, hence resulting in not only a significantly lower online computation load, but also a much quicker convergence to the (sub)optimum without multiple gradient evaluation and tracking cycles.
By capitalizing on the reduced online computation load and quicker convergence to the (sub)optimum, SOC offers a compelling solution for achieving optimal operation in batch processes.

Recently, the SOC for continuous processes has been extended to batch ones~\cite{Oliveira2016, Ye2017, Ye2022b}. In order to fit with the static formulation, the extended SOC spans a higher dimensional variable space with the step sequence of a batch process, leading to a static mapping between inputs and the objective function. Based on this, although the static SOC methodology can be applied, some structural constraints on the combination matrix have to be considered due to causality between variables. These structural constraints make the existing global SOC (gSOC) formulation not applicable, hence all these extended batch SOC methods have to be based on local SOC formulations. 
The inherent local limitation of these methods prevents them to appropriately address nonlinearities in batch processes. Without reaching a steady-state, the whole trajectory of a batch process may go across a wide operation space. Additionally, the dynamics of batch processes make trajectories highly sensitive to historical conditions. Even a minor change in the past condition could significantly alter the future trajectory. Although these methods adopt a trajectory-wise linearization, it is insufficient to account for the sensitivity to historical uncertainties, leading to unsatisfactory performance.

Motivated by the aforementioned shortcomings, in this paper, a SOC solution for batch processes is developed via reformulating the global SOC (gSOC) problem for batch processes. 
The gSOC problem was originally proposed for continuous processes\cite{Ye2015, Ye2013a, su2022, Ye2022} to minimize the average loss over the entire operating space. The problem was solved by using nonlinear simulation data over the entire operation space to select SOC CVs instead of using linear models, making it distinguished {from} local SOC methods. This paper is the first to consider the gSOC problem for batch processes.

The native gSOC problem is non-convex, {\em i.e.}, it is practically impossible to find the true global solution of the problem. As a concequence, it is difficult to make a meaningful comparison of different CVs. To address this issue, \citet{Ye2015} proposed a shortcut method that simplifies the gSOC problem by reformulating it as a quadratic programming problem for continuous processes. However, when dealing with batch processes, additional structural constraints on the combination matrix arise due to causality issues, further complicating the solution of the gSOC problem. Unfortunately, the existing shortcut methods in gSOC are not capable of handling such structural constraints effectively. 

It is worth to note that structural constraints on the combination matrix are common in SOC. In practice, sometimes different CVs composed of different measurements are wished, resulting in specific zeros in the combination matrix. \cite{jafariConvexReformulationsSelfoptimizing2022} 
To address the issue of structural constraints in SOC, \citet{jafariConvexReformulationsSelfoptimizing2022} introduced a Linear Matrix Inequality approach, which can handle various types of matrix structures, including block diagonal, triangular and structured measurement selection, involving specific zero elements in combination matrix.
However, when dealing with batch processes, the structural constraints become significantly more complex, necessitating the consideration of both diagonal and repetitive blocks to achieve simplicity and robustness in SOC implementation \cite{Ye2022b}. Regrettably, as of now, there are no methods available in the existing literature to specifically address these complex structural constraints \cite{Ye2018}. The difficulty of incorporating structural constraints likely prevented the original proposers of gSOC from adopting it for batch processes, leading them to use local SOC instead.\cite{Ye2018}
In all existing batch process SOC methods, the local formulation and direct numerical optimization are used to deal with the structural constraints.

Motivated by the aforementioned challenges, this paper revisits the gSOC problem and proposes a novel convex approximation method for the gSOC problem with structural constraints, which is also applicable to batch processes.

To the end, the paper presents 3 main contributions:
\begin{enumerate}
	\item Extend the gSOC method to address the dynamic optimization problem of batch processes.
	\item Prove that a class of structural constraints of CVs can be expressed as linear constraints in SOC problems.
	\item Propose a new shortcut algorithm for the gSOC problem.
\end{enumerate}

The structure of the rest paper is as follows:
Section 2 provides an overview of the main existing results for static gSOC methods, including a discussion of a shortcut method with convex approximation. 
In Section 3, the dynamic batch SOC problem is formulated as a static one with structural constraints. It is proven that a class of structural constraints can be expressed as linear equality constraints. And a novel convex approximation algorithm to solve batch gSOC with structural constraints is derived. 
Section 4 focuses on the investigation of a fed-batch reactor as a case study of the proposed approaches.
Finally, the paper concludes in the last section.
\subsection{Nation}
	 Let $I_m$ denote the $m \times m$ identity matrix. Use $\mathbf{0}_{m\times n}$ to represent a matrix of zeros, with subscript dimensions omitted when clear from context for simplicity. 
\section{Static gSOC}
\label{sec:gSOC}
To gain a deeper understanding of the challenges and solutions pertaining to batch gSOC, this section presents a concise overview of the existing static gSOC methodologies.

\subsection{Global approximation of economic loss}
Considering a typical chemical process with uncertainties, its operation optimization problem can be stated as follows:
\begin{equation}  
	\label{eq:sopt}
	\begin{aligned}
		&\min _{\mathbf{v}} J(\mathbf{v}, \mathbf{d}) \\
		&\text { s.t. }  \mathbf{g}(\mathbf{v}, \mathbf{d}) \leq 0 \\
	\end{aligned}
\end{equation}
where $J: \Real^{n_{v}} \times \Real^{n_{d}} \mapsto \Real$ is the steady state economic cost function, $ \mathbf{d} \in \Real^{n_d} $ stands for disturbances, and $ \mathbf{v} \in \Real^{n_v}$ stands for the steady state degrees of freedom, 
$\mathbf{g}: \Real^{n_{v}} \times \Real^{n_{d}} \mapsto \Real^{n_{g}}$ denotes the operational constraints.

Assuming that at an optimal point, {\em i.e.} a solution point of \eqref{eq:sopt}, a subset of $n_a$ constraints, $\mathbf{g}_a\in \mathbf{g}$ are bounded at zero. As the lower bounds are active, $\mathbf{g}_a:\Real^{n_a}$ are referred to as active constraints. Furthermore, it is assumed that the active constraints set $\mathbf{g}_a$ does not change within the whole operational space. Taking out these active constraints, the remaining degrees of freedom within the whole operational space reduces to $\mathbf{u}\in\Real^{n_u}$, where $n_u=n_v-n_a$. Correspondingly, the optimization problem becomes unconstrained with the reduced degrees of freedom, $\mathbf{u}$ as follows:
\begin{equation}
	\label{eq:usopt}
	\min _{\mathbf{u}} J(\mathbf{u}, \mathbf{d}) \\
\end{equation}

If all the disturbances $\mathbf{d}$ could be perfectly known, Problem~\eqref{eq:usopt} could be solved, and the solution $\mathbf{u}^{*}$ could be applied to get the optimal operation. Unfortunately, in practice, the perfect knowledge of disturbances is not available and such a strategy is not implementable. To counteract unknown disturbances, process measurements have to be used particularly through feedback control because these measurements contain certain information of disturbances, which can be represented in a measurement model as follows:
\begin{align}
	&\mathbf{y} = \mathbf{m}(\mathbf{u}, \mathbf{d}) \\
	&\mathbf{y}_{m} = \mathbf{y} + \mathbf{n}_y 
\end{align}
where $\mathbf{m}: \Real^{n_{u}} \times \Real^{n_{d}} \mapsto \Real^{n_{y}}$ describes the dependence of system output $ \mathbf{y} $ on $ \mathbf{u} $ and $ \mathbf{d} $, and finally,
$ \mathbf{y}_{m} \in \Real^{n_{y}} $ denotes the real measured signals and {$ \mathbf{n}_y \in \Real^{n_{y}}  $} denotes measurement noises. 

Among many efforts to restore process optimality through measurement feedback, the self-optimizing control (SOC) is the most promising one. It aims to find the most appropriate controlled variables (CVs) through an off-line analysis and when these CVs are maintained online at constant setpoints through feedback control, the steady state process operation is optimal or near optimal even under unknown disturbances. To achieve this goal, CV selection is the core. In general, a CV can be a parameterized function of process measurements, {\em i.e.} $\mathbf{c}=\mathbf{h}(\mathbf{y}_m,\boldsymbol{\theta})$ where $\boldsymbol{\theta}$ denotes the parameters. In practice, a linear combination CV is often sufficient to restory process optimality, {\em i.e.} $\mathbf{c} = \mathbf{H}\mathbf{y}_{m}$, where $\mathbf{H} \in \mathbb{R}^{n_{u} \times n_{y}}$ is the combination matrix.

An economic loss is commonly used to quantify the performance of CVs shown as follows:
\begin{equation}
	\label{eq:loss}
	L =J(\mathbf{u}|_{\mathbf{c}}, \mathbf{d})-J^{*}(\mathbf{d}) = L(\mathbf{d}, \mathbf{n}_y, \mathbf{H})
\end{equation}
where $ J^{*}(\mathbf{d}) $ stands for the optimal cost at a given disturbance $\mathbf{d}$, whilst $\mathbf{u}|_{\mathbf{c}}$ represent closed-loop $\mathbf{u}$ under selected CV $\mathbf{c}$. Therefore, the loss depends on not only disturbance $\mathbf{d}$, but also measurement noise, $\mathbf{n}_y$, as well as the CV combination matrix, $\mathbf{H}$. Instead of the loss at a single nominal point, the global SOC (gSOC) considers the
average loss in the whole uncertain space, spanned by $\mathbf d$ and $\mathbf n_y$, which can be derived as follows for CV selection:
\begin{equation}
	\label{eq:ELoss}
	\begin{aligned}
		L_{\mathrm{gav}}(\mathbf{H})&=\underset{d \in \mathcal{D}, \mathbf{n}_y \in \mathcal{N}}E[L(\mathbf{d}, \mathbf{n}_y, \mathbf{H})] \\  
		&=\int_{\mathbf{d} \in \mathcal{D}, \mathbf{n}_y \in \mathcal{N}} \rho(\mathbf{d}) \rho(\mathbf{n}_y) L(\mathbf{d}, \mathbf{n}_y, \mathbf{H}) \mathrm{d} \mathbf{n}_y \mathrm{d} \mathbf{d}\\
		&\approx \dfrac{1}{N}\sum_{i=1}^{N} L(\mathbf{d}_i, \mathbf{n}_{y,i}, \mathbf{H})
	\end{aligned}
\end{equation}
where $ E[.] $ and $ \rho(.) $ denote the expected value and the probability density of a random variable respectively. $ \mathcal{D} $ and $ \mathcal{N} $ are the variation region spanned by $ \mathbf{d} $ and $ \mathbf{n}_y  $, respectively. $\mathbf d_i$ and $\mathbf n_{y,i}$ are Monte Carlo samples over the whole uncertain space, and $N$ is the number of samples.

{To} sum up, the gSOC problem can be formulated as follows:
\begin{equation}  
	\label{eq:soc}
	\begin{aligned}
		&\min _{\mathbf{H}} L_{\mathrm{gav}}(\mathbf{H}) \\
		&\text { s.t. }  
		\mathbf{y} = \mathbf{m}(\mathbf{u}, \mathbf{d}) \\
		& \qquad \mathbf{y}_{m} = \mathbf{y} + \mathbf{n}_y \\
		& \qquad \mathbf{H}\mathbf{y}_m  =\mathbf{c}_s
	\end{aligned}
\end{equation}
where $\mathbf c_s$ is the setpoint vector of CVs.
It should be noted that the measurement vector $\mathbf{y}_m$ can be augmented to $\tilde{\mathbf{y}}_m = \left[ 1 \quad \mathbf{y}_m^{\top} \right]^{\top}$ such that the combination matrix $\mathbf{H}$ is augmented to $\tilde{\mathbf{H}} = \left[-\mathbf{c}_s \quad \mathbf{H}\right]$ and the augmented closed-loop condition is $\tilde{\mathbf{H}}\tilde{\mathbf y}=0$. For shack of convenience, in the rest of the paper, $\mathbf{ y}_m$ and $\mathbf H$ represent the augmented measurement vector and combination matrix.

Due to the inclusion of the process model in the loss, directly solving Problem~\eqref{eq:soc} is often difficult. To simplify the problem, quadratic approximation is considered \cite{Halvorsen2003}:
around the optimal points, applying the Taylor expansion to the economic loss \eqref{eq:loss} in terms of $\mathbf c$ leads to a quadratic form of the loss as follows:
\begin{equation}
	\label{eq:dlossdc}
	L \approx \dfrac{1}{2} \left(\mathbf{c} - \mathbf{c}^{*}\right)^{\top} \mathbf{J}_{c c}\left( \mathbf{c}- \mathbf{c}^{*}\right) 
\end{equation}
where $ \mathbf{c}^{*} $ denotes the optimal values of the CVs and $ \mathbf{J}_{c c} $ is the Hessian matrix of $ J $ with respect to $ \mathbf{c} $ evaluated at $ \mathbf{c}^{*} $. $ \mathbf{J}_{c c} $ could be represented as follows:
\begin{equation}
	\label{eq:jcc2juu}
	\mathbf{J}_{c c} = (\mathbf{H} \mathbf{G}_y)^{-\top}\mathbf{J}_{u u}(\mathbf{H} \mathbf{G}_y)^{-1}
\end{equation}
where $ \mathbf{J}_{u u} $ is the Hessian matrix of $ J $ with respect to $ \mathbf{u} $ evaluated at $ \mathbf{u}^* $, and $ \mathbf{G}_y $ is the sensitivity matrix of $ \mathbf{y} $ in terms of $ \mathbf{u} $. Since $ \mathbf{c} = \mathbf{H}\mathbf{y} = \mathbf{H}(\mathbf{y}_m - \mathbf{n}_y) $ and the feedback result $ \mathbf{H}\mathbf{y}_m = 0 $, we could get $ \mathbf{c} = -\mathbf{H}\mathbf{n}_y $. Furthermore, the optimal values of the CVs should be obtained as $ \mathbf{c}^{*} = \mathbf{H}\mathbf{y}^{*}  $, where $ \mathbf{y}^{*} $ denotes the optimal value of measurements. Hence, Eq.~\eqref{eq:dlossdc} could be formulated as 
\begin{equation}
	\label{eq:dlossdc_y}
	L \approx \dfrac{1}{2} \left(\mathbf{H}\mathbf{y}^{*}+\mathbf{H}\mathbf{n}_y\right)^{\top} \mathbf{J}_{c c}\left( \mathbf{H}\mathbf{y}^{*}+\mathbf{H}\mathbf{n}_y\right)
\end{equation}
So the average loss in \eqref{eq:ELoss} can be approximated as follows:
\begin{equation}
	\label{eq:qsloss}
	L_{\mathrm{gav}}(\mathbf{H}) \approx E[ \dfrac{1}{2} \left(\mathbf{H}\mathbf{y}^{*}+\mathbf{H}\mathbf{n}_y\right)^{\top} \mathbf{J}_{c c}\left( \mathbf{H}\mathbf{y}^{*}+\mathbf{H}\mathbf{n}_y\right) ]
\end{equation}

Based on the aforementioned approximation of the loss function, we can derive the following lemma:

\begin{lemma}
	\label{pro:1}
	The average loss in \eqref{eq:qsloss} for a given $ \mathbf{H} $ could be decomposed as 
	\begin{equation}
		L_{\mathrm{gav}}(\mathbf{H}) = E[L^d] + E[L^n]
	\end{equation}
	where
	\begin{equation}
		\label{eq:LdLn}
		\begin{aligned}
			L^d &= \dfrac{1}{2} \mathbf{y}^{*\top}\mathbf{H}^{\top} \mathbf{J}_{c c}  \mathbf{H}\mathbf{y}^{*} \\
			L^n &= \dfrac{1}{2} \mathrm{tr}(\mathbf{W}_y^2\mathbf{H}^{\top} \mathbf{J}_{c c}  \mathbf{H})
		\end{aligned}	
	\end{equation}
	$ \mathrm{tr}(\cdot) $ denotes the trace of a matrix, and $ \mathbf{W}_y^2 = E(\mathbf{n}_y\mathbf{n}^{\top}_y) $ is diagonal provided that $ \mathbf{n}_y $ are mutually independent.
\end{lemma}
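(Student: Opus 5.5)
Expanding the quadratic form inside the expectation in \eqref{eq:qsloss} and then taking expectations term by term should give the result directly. Write $\mathbf{M}=\mathbf{H}^{\top}\mathbf{J}_{cc}\mathbf{H}$, which is symmetric because $\mathbf{J}_{cc}$ is symmetric (a Hessian). Then
\begin{equation*}
\tfrac12(\mathbf{H}\mathbf{y}^{*}+\mathbf{H}\mathbf{n}_y)^{\top}\mathbf{J}_{cc}(\mathbf{H}\mathbf{y}^{*}+\mathbf{H}\mathbf{n}_y)
=\tfrac12\mathbf{y}^{*\top}\mathbf{M}\mathbf{y}^{*}+\mathbf{y}^{*\top}\mathbf{M}\mathbf{n}_y+\tfrac12\mathbf{n}_y^{\top}\mathbf{M}\mathbf{n}_y ,
\end{equation*}
where symmetry of $\mathbf{M}$ merges the two cross terms. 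By linearity of expectation, $L_{\mathrm{gav}}$ is the sum of the expectations of these three pieces. The first piece is exactly $L^d$ in \eqref{eq:LdLn}.

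The main step is to show that the cross term has zero expectation. For this we use the standing modelling assumptions behind \eqref{eq:ELoss}. There the joint density factorizes as $\rho(\mathbf{d})\rho(\mathbf{n}_y)$, so noise and disturbance are independent. In addition, $\mathbf{n}_y$ is taken to be zero mean. The quantities $\mathbf{y}^{*}$ and $\mathbf{J}_{cc}$ depend only on $\mathbf{d}$ (for fixed $\mathbf{H}$), since they are evaluated at the optimum $\mathbf{u}^{*}(\mathbf{d})$. Hence
\begin{equation*}
E[\mathbf{y}^{*\top}\mathbf{M}\mathbf{n}_y]=E_{\mathbf{d}}\big[\mathbf{y}^{*\top}\mathbf{M}\,E_{\mathbf{n}_y}[\mathbf{n}_y]\big]=0 .
\end{equation*}
This is the step I expect to need care. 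The zero-mean assumption is implicit and must be stated explicitly. If $\mathbf{J}_{cc}$ is treated as a constant, for example evaluated at a nominal point, the argument only gets simpler.

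For the noise term, use the trace identity $\mathbf{n}_y^{\top}\mathbf{M}\mathbf{n}_y=\mathrm{tr}(\mathbf{n}_y\mathbf{n}_y^{\top}\mathbf{M})$. Conditioning on $\mathbf{d}$ and using independence again gives
\begin{equation*}
E\big[\tfrac12\mathbf{n}_y^{\top}\mathbf{M}\mathbf{n}_y\big]=E\big[\tfrac12\mathrm{tr}(E[\mathbf{n}_y\mathbf{n}_y^{\top}]\mathbf{M})\big]=E\big[\tfrac12\mathrm{tr}(\mathbf{W}_y^2\mathbf{H}^{\top}\mathbf{J}_{cc}\mathbf{H})\big]=E[L^n].
\end{equation*}

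Finally, $\mathbf{W}_y^2=E(\mathbf{n}_y\mathbf{n}_y^{\top})$ is diagonal whenever the noise components are mutually independent and zero mean, because the off-diagonal entries are $E[n_in_j]=E[n_i]E[n_j]=0$. Collecting the three terms gives $L_{\mathrm{gav}}(\mathbf{H})=E[L^d]+E[L^n]$.
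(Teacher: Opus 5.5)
Your proof is correct and follows the same route as the paper. The paper gives no proof of its own and only cites \cite{Ye2015}, where the decomposition is obtained in exactly your way: expand the quadratic form, drop the cross term using zero-mean noise independent of $\mathbf{d}$, and rewrite the noise term as $E[\mathbf{n}_y^{\top}\mathbf{H}^{\top}\mathbf{J}_{cc}\mathbf{H}\mathbf{n}_y]=\mathrm{tr}(\mathbf{W}_y^2\mathbf{H}^{\top}\mathbf{J}_{cc}\mathbf{H})$. You are also right to state the zero-mean assumption explicitly, since the statement uses it silently both for the cross term to vanish and for $\mathbf{W}_y^2$ to be diagonal.
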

\begin{proof}
	See \cite{Ye2015}
\end{proof}
Lemma \ref{pro:1} indicates the average loss can be considered as two parts, one is caused by disturbance $ \mathbf{d} $, and the other is caused by measurement error and noise $ \mathbf{n}_y $.

\begin{lemma}
	\label{pro:QH}
	The value of $ L $ does not change when $ \mathbf{H} $ is premultiplied by any invertible matrix $ \mathbf{Q} $. 
\end{lemma}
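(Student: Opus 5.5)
The approach is to substitute $\mathbf{H}'=\mathbf{Q}\mathbf{H}$, with $\mathbf{Q}\in\Real^{n_u\times n_u}$ invertible, into the quadratic loss \eqref{eq:dlossdc_y} together with the expression \eqref{eq:jcc2juu} for $\mathbf{J}_{cc}$, and to check that every occurrence of $\mathbf{Q}$ cancels. The key observation is that $\mathbf{J}_{cc}$ itself depends on $\mathbf{H}$ through $\mathbf{H}\mathbf{G}_y$. Under the substitution it becomes
\[
\mathbf{J}_{cc}' = (\mathbf{Q}\mathbf{H}\mathbf{G}_y)^{-\top}\mathbf{J}_{uu}(\mathbf{Q}\mathbf{H}\mathbf{G}_y)^{-1} = \mathbf{Q}^{-\top}\mathbf{J}_{cc}\mathbf{Q}^{-1}.
\]
This uses $(\mathbf{Q}\mathbf{A})^{-1}=\mathbf{A}^{-1}\mathbf{Q}^{-1}$ with $\mathbf{A}=\mathbf{H}\mathbf{G}_y$ square and invertible, which is implicitly assumed whenever \eqref{eq:jcc2juu} is used.

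Next, insert this into \eqref{eq:dlossdc_y}:
\[
L' = \tfrac12(\mathbf{y}^*+\mathbf{n}_y)^\top\mathbf{H}^\top\mathbf{Q}^\top\,\mathbf{Q}^{-\top}\mathbf{J}_{cc}\mathbf{Q}^{-1}\,\mathbf{Q}\mathbf{H}(\mathbf{y}^*+\mathbf{n}_y) = L .
\]
The identity holds pointwise, for every realization of $\mathbf{d}$ and $\mathbf{n}_y$. Taking expectations, or averaging over the Monte Carlo samples, then shows that $L_{\mathrm{gav}}$ in \eqref{eq:qsloss} is also invariant. As a consistency check, I would verify the same cancellation separately on the two terms $L^d$ and $L^n$ of Lemma~\ref{pro:1}. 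For $L^n$ it follows from $\mathbf{H}^\top\mathbf{Q}^\top\mathbf{Q}^{-\top}\mathbf{J}_{cc}\mathbf{Q}^{-1}\mathbf{Q}\mathbf{H}=\mathbf{H}^\top\mathbf{J}_{cc}\mathbf{H}$ inside the trace.

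I would also give a conceptual remark, since the statement refers to $L$ generally and not only to its quadratic approximation. The closed-loop condition $\mathbf{Q}\mathbf{H}\mathbf{y}_m=0$ is equivalent to $\mathbf{H}\mathbf{y}_m=0$ when $\mathbf{Q}$ is invertible. This holds for the augmented formulation, where setpoints are absorbed into $\mathbf{H}$. So the closed-loop input $\mathbf{u}|_{\mathbf{c}}$ is identical under both choices, and the exact loss \eqref{eq:loss} is unchanged as well.

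The only delicate point is the invertibility of $\mathbf{H}\mathbf{G}_y$, which is needed for $\mathbf{J}_{cc}$ to be defined. It transfers automatically, because $\mathbf{Q}\mathbf{H}\mathbf{G}_y$ is invertible if and only if $\mathbf{H}\mathbf{G}_y$ is.
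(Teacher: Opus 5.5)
Your proposal is correct and is essentially the standard argument that the paper defers to (its proof only cites \cite{Ye2015}): under $\mathbf{H}\mapsto\mathbf{Q}\mathbf{H}$ one has $\mathbf{J}_{cc}\mapsto\mathbf{Q}^{-\top}\mathbf{J}_{cc}\mathbf{Q}^{-1}$, so every factor of $\mathbf{Q}$ cancels in the quadratic loss, sample by sample, and hence also in the average. Your extra remark is also valid and slightly stronger: in the augmented formulation $\mathbf{Q}\mathbf{H}\mathbf{y}_m=0$ holds if and only if $\mathbf{H}\mathbf{y}_m=0$, so the exact loss \eqref{eq:loss} is invariant as well, not just its quadratic approximation.
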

\begin{proof}
	See \cite{Ye2015}.
\end{proof}

\begin{remark}
	\label{re:QH}
	Lemma \ref{pro:QH}  reflects that scaling a whole set of CVs has no impact on the loss $ L $.  This property allows the designer to consider additional requirements by imposing specific constraints on $\mathbf{H}$, as long as these constraints can be expressed by pre-multiplying $\mathbf{H}$ with any non-singular matrix $\mathbf{Q}$. 
	For instance, \citet{Alstad2007} proposed the constraint $\mathbf{H} \mathbf{G}_y = \mathbf{I}$ to achieve a decoupled steady-state response from $\mathbf{u}$ to $\mathbf{c}$. Similarly, \citet{Yelchuru2012} utilized the constraint $\mathbf{H} \mathbf{G}_y = \mathbf{J}_{u u}^{1/2}$, which effectively ensures $\mathbf{J}_{c c} = \mathbf{I}_{n_u}$ (where $\mathbf{I}_{n_u}$ denotes the identity matrix of dimension $n_u \times n_u$), simplifying the loss calculation.
	However, when dealing with an $\mathbf{H}$ matrix that has structural constraints, certain limitations exist on the structure of the pre-multiplier to preserve the validity of these structural constraints. This aspect will be further explored in subsequent discussions.
\end{remark}

Owing to the intricacies of nonlinear mapping functions, the global mean loss may be appraised through Monte Carlo simulation founded on its primordial definition as follows:
\begin{equation}
	\label{eq:qsloss1}
	L_{\mathrm{gav}}(\mathbf{H}) \approx \sum_{i=1}^{N} \dfrac{1}{2} \left(\mathbf{H}\mathbf{y}_i^{*}+\mathbf{H}\mathbf{n}_{y,i}\right)^{\top} \mathbf{J}_{c c,i}\left( \mathbf{H}\mathbf{y}_i^{*}+\mathbf{H}\mathbf{n}_{y,i}\right) 
\end{equation}
where the subscript $i$ stands for i-th sample.

\subsection{gSOC problem and a shortcut method}
\label{sec:gSOCsc}
Based on the above approximation, the gSOC problem in \eqref{eq:soc} can be simplified to :
	\begin{equation}
		\label{eq:gSOC_opt}
		\begin{aligned}
			&\min _{\mathbf{H}} \dfrac{1}{N}\sum_{i=1}^{N}L(\mathbf{d}_i, \mathbf{n}_{y,i}, \mathbf{H}) = \dfrac{1}{N}\sum_{i=1}^{N}\left[ L^d_i+L^n_i \right] \\
		\end{aligned}
	\end{equation}
where the suscript $i$ stands for the sample number.

As shown in Eq.~\eqref{eq:gSOC_opt}, the optimization problem is non-convex, hence, it is difficult to get the global minimum solution. If using numerical algorithms to solve this problem, due to non-convexity, the solution is initial value dependent, {\em i.e.}, different initial values may produce different local optimal solutions. Therefore, for the convenience of further analysis, this problem is further simplified into a convex form. 
The existing way to approximate the problem is by treating $\mathbf{J}_{cc,i}$ as a constant over all disturbance scenarios. This approximation leads to a quadratic cost function with linear equality constraints, hence it can be analytically solved, although sub-optimally.
Thus, the next, we will present this shortcut method of solving the global SOC problem:
\begin{equation}
	\label{eq:gSOC_opt_short}
	\begin{aligned}
		&\min _{\mathbf{H}} {\bar{L}}_{\mathrm{gav}}(\mathbf{H})
		\approx \dfrac{1}{2}\sum_{i=1}^{N}  \mathbf{y}_i^{*\top}\mathbf{H}^{\top}   \mathbf{H}\mathbf{y}_i^{*} +  \mathrm{tr}(\mathbf{W}_y^2\mathbf{H}^{\top}   \mathbf{H})\\
		&\text { s.t. }  \mathbf{H G}_{y,0}=\mathbf{J}_{\mathrm{uu},0}^{1 / 2}
	\end{aligned}
\end{equation}
Here, the $ 0 $th disturbance scenario $ \mathbf{d}_0 $ is selected as a particular reference point to form the constraint, and $ {\bar{L}}_{\mathrm{gav}}(\mathbf{H})  $ denotes the further approximated average loss under all uncertainties with an enforced condition  $ \mathbf{H G}_{y,0}=\mathbf{J}_{\mathrm{uu},0}^{1 / 2} $.

The analytical solution of Problem~\eqref{eq:gSOC_opt_short} is
\begin{equation}
	\mathbf{H}^{\top}=\left(\tilde{\mathbf{Y}}^{\top} \tilde{\mathbf{Y}}\right)^{-1} \mathbf{G}_{y,0}\left(\mathbf{G}_{y,0}^{\top}\left(\tilde{\mathbf{Y}}^{\top} \tilde{\mathbf{Y}}\right)^{-1} \mathbf{G}_{y,0}\right)^{-1} \mathbf{J}_{\mathrm{uu},0}^{1 / 2}
\end{equation}
where
\begin{equation}
	\mathbf{Y}=\left[\begin{array}{c}
		{\mathbf{y}^*_{1}}^{\top} \\
		{\mathbf{y}^*_{2}}^{\top} \\
		\vdots \\
		{\mathbf{y}^*_{N}}^{\top}
	\end{array}\right]=\left[\begin{array}{cccc}
		1 & \mathrm{y}_{1,1}^* & \cdots & y^*_{n_{y},1} \\
		1 & y^*_{1,2} & \cdots & y^*_{n_{y},2} \\
		\vdots & \vdots & & \vdots \\
		1 & y^*_{1,N} & \cdots & y^*_{n_{y},N}
	\end{array}\right]
\end{equation}
\begin{equation}
	\tilde{\mathbf{Y}}=
	\left[
	\begin{array}{c}
		\frac{1}{\sqrt{N}}\mathbf{Y} \\
		\mathbf{W}
	\end{array}
	\right]
\end{equation}
$\mathbf{W} = \mathrm{diag}(0,\mathbf{W}_y)$.
According to Lemma~\ref{pro:QH}, by choosing a transformation matrix $\mathbf{Q} = \left(\mathbf{G}_{y,0}^{\top}\left(\tilde{\mathbf{Y}}^{\top} \tilde{\mathbf{Y}}\right)^{-1} \mathbf{G}_{y,0}\right)\mathbf{J}_{\mathrm{uu},0}^{-1 / 2} $, a more concise equivalent expression is given as
\begin{equation}
	\mathbf{H}^{\top}=\left(\tilde{\mathbf{Y}}^{\top} \tilde{\mathbf{Y}}\right)^{-1} \mathbf{G}_{y,0}.
\end{equation}

Nevertheless, this simplification mentioned is very crude, because, in reality, both $ \mathbf{J}_{u u} $ and $ \mathbf{G}_{{{y}}}$ are not constant across the entire operating window. Since the constraint is only valid at a reference point $k$, this simplification is equivalent to linearize $\mathbf{J}_{uu}^{1/2}$ around the reference point, hence, similar to the local SOC consideration.

\section{Batch processes SOC}
\subsection{Dynamic optimization of batch processes}
In this paper, the batch SOC method is founded on the static perspective of a batch process. The explanation of this concept has been thoroughly explored in the literature on batch process optimization\cite{Srinivasan2007}. Hence in the following, a concise overview of the established notions will be provided.

In particular, the following type of batch process optimization is considered:
\begin{subequations}
	\label{eq:ocp}
	\begin{align}
		&\min _{\bar{\mathbf{u}}}
		J=\phi(\mathbf{x}(L))+\sum_{k=0}^{L-1} \psi^k(\mathbf{x}(k), \mathbf{u}(k))\\
		&\text { s.t. }  \mathbf{x}(k+1) = \mathbf{f}^{k}(\mathbf{x}(k),\mathbf{u}(k),{\mathbf{d}}(k)) \\
		&\qquad \mathbf{x}(0)=\mathbf{x}_0        \\                                              
		&\qquad \bar{\mathbf{u}} = \left[\mathbf{u}(0),\mathbf{u}(1),\dots,\mathbf{u}(L-1)\right]
	\end{align}
\end{subequations}
\\alpha{AAA u- different follows}where $ k \in [0,L] $ and $ k \in \mathbb{N} $. $ L $ is a finite terminal time. $ \mathbf{x}(k) \in \Real^{n_{x}}$ and $ \mathbf{u}(k ) \in \Real^{n_{u}}$ stand for, respectively, the state variables and control input at time instant $ k $. $ \mathbf{f}^{k}(.):\Real^{n_{x}+n_{u}+n_{d}}\to\Real^{n_{x}} $ denotes time varying model of process dynamic functions. $ {\mathbf{d}}(k) $ is the disturbance at time instant $ k $.
Furthermore, $ \phi $ is a scalar cost associated with the final state $ \mathbf{x}(L) $, and $ \psi^k $ is the contribution at time $ k $ to the integrated cost, which is allowed to be a time-varying function of states and inputs. $\bar{\mathbf{u}} $ represents the stacked control input.

The measurement model is as follows: 
\begin{equation}
	\begin{aligned}
		& \mathbf{y}(k) = \mathbf{g}^{k}(\mathbf{x}(k)) \\
		&\mathbf{y}_m(k) = \mathbf{g}^{k}(\mathbf{x}(k)) + \mathbf{n}_y(k)\\
		&\mathbf{u}_m(k) = \mathbf{u}(k) + \mathbf{n}_u(k) \\
	\end{aligned}
\end{equation}
$ \mathbf{y}(k) \in \Real^{n_{y}}$ and $ \mathbf{y}_m(k) \in \Real^{n_{y}}$ are true measurement and measured output variables with noise. 
$\mathbf{u}(k) \in \Real^{n_{u}}$ and $ \mathbf{u}_m(k) \in \Real^{n_{u}}$ are the true control input and the measured control input with implementation error.
$  \mathbf{g}^{k}(.):\Real^{n_{x}}\to\Real^{n_{y}} $ is the measurement model. $\mathbf{n}_y(k) \in \Real^{n_{y}}$ and $\mathbf{n}_u(k) \in \Real^{n_{u}}$ are the measurement errors(noise) and the implementation error.
{\begin{remark}
 	In this work, we considered the batch process to be unconstrained or convertible to an unconstrained problem. Active constraints are typically selected as controlled variables first, allowing constrained optimization to be transformed into unconstrained optimization. However, it should be noted that active constraint sets can change in dynamic optimization problems, which is more difficult to handle than steady-state active set changes. For instance, endpoint constraints require action ahead of time, and switching between different active path constraint sets must be done at the right time. Addressing these challenges of dynamic active set changes represents an interesting direction for future research.
\end{remark}}

By stacked states, the problem could be reformulated to a static optimizing problem as:
\begin{subequations}
		\begin{align}
		&\min _{\bar{\mathbf{u}}}
		J=\phi(\mathbf{x}({L}))+\sum_{k=0}^{L-1} \psi^k(\mathbf{x}(k), \mathbf{u}(k))\\
		&\text { s.t. }  \bar{\mathbf{x}} = \mathbf{f}(\bar{\mathbf{u}},\bar{{\mathbf{d}}}) \label{eq:x=fud}                                
	\end{align}
\end{subequations}
where $ \mathbf{f} $ is the reformulated static mapping function of states.
And
	$ \bar{\mathbf{y}} = \mathbf{g}(\bar{\mathbf{x}})  \label{eq:y=gx} $
denotes the reformulated static mapping function of outputs.
\begin{equation}
	\label{eq:dsoc}
	\begin{aligned}
		& \bar{\mathbf{u}} = \left[\mathbf{u}(0)^{\top},\mathbf{u}(1)^{\top},\dots,\mathbf{u}(L-1)^{\top}\right]^{\top} \in \mathbb{R}^{n_u L}\\
		& \bar{\mathbf{x}} = \left[\mathbf{x}(1)^{\top},\mathbf{x}(2)^{\top},\dots,\mathbf{x}(L)^{\top}\right]^{\top} \in \mathbb{R}^{n_x L}\\
		& \bar{\mathbf{y}} = \left[\mathbf{y}(0)^{\top},\mathbf{y}(1)^{\top},\dots,\mathbf{y}(L-1)^{\top}\right]^{\top} \in \mathbb{R}^{n_y L}\\  
		& \bar{{\mathbf{d}}} = \left[\mathbf{x}(0)^{\top},{\mathbf{d}}(0)^{\top},{\mathbf{d}}(1)^{\top},\dots,{\mathbf{d}}(L-1)^{\top}\right]^{\top}  \in \mathbb{R}^{n_x + n_d L}\\ 
		& \bar{\mathbf{n}}_y = \left[\mathbf{n}_y(0)^{\top},\mathbf{n}_y(1)^{\top},\dots,\mathbf{n}_y(L-1)^{\top}\right]^{\top} \in \mathbb{R}^{n_y L}\\
		& \bar{\mathbf{n}}_u = \left[\mathbf{n}_u(0)^{\top},\mathbf{n}_u(1)^{\top},\dots,\mathbf{n}_u(L-1)^{\top}\right]^{\top} \in \mathbb{R}^{n_u L}\\ 
	\end{aligned}
\end{equation}
are the stacked inputs, states, measurements and disturbances. Noted that the initial states are also considered as disturbances.   

By substituting Eq. \eqref{eq:x=fud} into Eq. \eqref{eq:y=gx}, the resulting equation can be expressed as: 
\begin{equation}
	\bar{\mathbf{y}} = \mathbf{g}(\mathbf{f}(\bar{\mathbf{u}},\bar{{\mathbf{d}}})).
\end{equation}
This equation represents a static input-output mapping of a batch process. Consequently, when examining batch processes from a batch-wise standpoint, their dynamic optimization can be interpreted as a static optimization problem.

However, there are two features that make batch processes distinct from static processes:
\begin{enumerate}
	\item 
	The measurement of the future cannot be obtained at the present moment. Therefore, {when} designing controllers or controlled variables, only current and past measurement information can be used, rather than all measurement information, while in steady-state optimization, it is often assumed that all measurement information is available.
	\item 
	In steady-state optimization, all manipulation variables influence one another, yet in batch process optimization, owing to causal effects, future control inputs do not impinge upon current ones. The sequential nature of production necessitates foresight in manipulating inputs, lest preceding actions undermine subsequent performance. Optimization must therefore have a dynamic and prospective outlook.
\end{enumerate}

\subsection{Scheme of Self-optimizing controlled variables}

The object of this paper is to identify an optimal combination matrix
$ \mathbf{H} $, such that economic loss is minimized by selecting
self-optimizing CVs, $ \mathbf{c} =\mathbf{H}\mathbf{y} $.
However, due to dynamic and causality, the self-optimizing CVs in batch process {are} distinguished form steady-state process.
Herein lie two salient contrasts between crafting self-optimizing controlled variables for batch and steady-state processes:
\begin{enumerate}
	\item The measured variables in the future cannot be obtained at present due to the influence of causality, {and} at the same time, both current and past measured variables can be used to design CVs. So in a batch process, the measured variables can be expressed as:
	$$
	\bar{\mathbf{y}}(k) = \left[\mathbf{y}(0)^{\top},\mathbf{y}(1)^{\top},\dots,\mathbf{y}(k)^{\top}\right]^{\top} \in \mathbb{R}^{n_y(k+1)  }.
	$$
	\item\alert{ The CVs design in steady state is usually independent of the controller design. But in the batch process, both need to be considered at the same time.
	A feasible input adaptation law (controller) can be derived based on the obtained CV functions, provided $ \textbf{u} $ is included in the measurements. 
	$
	\mathbf{\xi}(k) := [\mathbf{y}(k)^{\top},\mathbf{u}(k)^{\top}]^{\top} \in \mathbb{R}^{(n_y+n_u) }
	$
	denotes the extend measurement, and 
	$$
	\bar{\mathbf{\xi}}(k) = \left[ 1,  \mathbf{\xi}(0)^{\top},\mathbf{\xi}(1)^{\top},\dots,\mathbf{\xi}(k)^{\top}\right]^{\top} \in \mathbb{R}^{(n_y+n_u)(k+1)+1  }.
	$$
	Here the constant $1$ is also extended so that the setpoint of the controlled variable can be treated as 0 in the subsequent steps.
	The CVs associated with time instant $t = k$ are 
	$$
	\mathbf{c}(k) = \mathbf{H}(k)\bar{\mathbf{\xi}}(k) = \mathbf{0}_{n_u} \in \Real^{n_u}
	$$
	with setpoints $ \mathbf{c}_s(k) \in \mathbb{R}^{n_u}$. 
	 $\mathbf{H}(k)  := \left[\mathbf{c}_s(k) \ \mathbf{H}_0(k) \ldots \mathbf{H}_k(k)\right] \in \Real^{n_u \times \left((n_y+n_u)(k+1)+1\right)}$, where $\mathbf{H}_j(k) \in \Real^{ n_u \times (n_y+n_u)}$ represents the contribution of measurements at time $j$ to CVs at time $k$. 
	$$  \mathbf{c}(k)=\underbrace{\left[\begin{array}{llll} 
			[\mathbf{c}_s(k) & \mathbf{H}_{[0:k-1]}(k)] & \mathbf{H}_{y,k}(k) & \mathbf{H}_{u,k}(k)
		\end{array}\right]}_{\mathbf{H}(k)} \underbrace{\left[\begin{array}{c}
			\bar{\mathbf{\xi}}(k-1) \\ 
			\mathbf{y}(k) \\
			\mathbf{u}(k) 
		\end{array}\right]}_{\bar{\mathbf{\xi}}(k)} =\mathbf{0}_{n_u} $$
	\begin{equation}
		\label{eq:u=hy}
		\Rightarrow \mathbf{u}(k) = \mathbf{H}_{u,k}^{-1}(k)\left(-[\mathbf{c}_s(k) \quad \mathbf{H}_{[0:k-1]}(k)]\bar{\mathbf{\xi}}(k-1)- \mathbf{H}_{y,k}(k) \mathbf{y}(k)\right)
	\end{equation}
	where $\mathbf{H}_{[0:k-1]}(k) := \left[\mathbf{H}_0(k) \ldots \mathbf{H}_{k-1}(k)\right] \in \Real^{n_u \times (n_y+n_u)k}$,  
	$ \mathbf{H}_{u,k}(k) $ and $ \mathbf{H}_{y,k}(k) $ are submatrices of $ \mathbf{H}_k(k) $, $ \mathbf{H}_k(k)  = [\mathbf{H}_{y,k}(k) \ \mathbf{H}_{u,k}(k)]$. 
	In the following discussion, we will introduce an additional {constraint} to keep that $ \mathbf{H}_{u,k}(k) $ is mathematically reversible in Section \ref{sec:sc}. }
\end{enumerate}
In conclusion, due to the influence of causality, the measured variables in the future cannot be obtained at present. And the manipulated variables require explicit inclusion within controlled variables.
The following typical structures will be discussed:
\begin{enumerate}[label=\textit{Structure} \arabic*,itemindent=50 pt]
	\item Lower-block triangular (LBT)	matrix 
\alert{	$$
	\begin{aligned}
		& \bar{\mathbf{H}}=\left[\begin{array}{cccccc}
			\mathbf{c}_s(0) & \mathbf{H}_0(0) & \mathbf{0}_{ n_u \times {(n_u+n_y)}} & \mathbf{0}_{ n_u \times {(n_u+n_y)}} & \cdots & \mathbf{0}_{ n_u \times {(n_u+n_y)}} \\
			\mathbf{c}_s(1) & \mathbf{H}_0(1) & \mathbf{H}_1(1) & \mathbf{0}_{ n_u \times {(n_u+n_y)}} & \cdots & \mathbf{0}_{ n_u \times {(n_u+n_y)}} \\
			\vdots & \vdots & \vdots & \vdots & \ddots & \vdots \\
			\mathbf{c}_s(L-1) & \mathbf{H}_0(L-1) & \mathbf{H}_1(L-1) & \mathbf{H}_2(L-1) & \cdots & \mathbf{H}_{L-1}(L-1)
		\end{array}\right] \\
		& \text { written as } \mathrm{LBT}[\mathbf{H}(0), \mathbf{H}(1), \ldots, \mathbf{H}(L-1)].
	\end{aligned} 
	$$ }
	For brevity, we write the big combination matrix as $\bar{\mathbf { H }}=\mathrm{LBT}[\mathbf{H}(0), \ldots, \mathbf{H}({L}-1)]$, where $\mathbf{H}(k) = \left[\mathbf{c}_s(k) \ \mathbf{H}_0(k) \ldots \mathbf{H}_k(k)\right]$.
	
	This structure maximally utilizes the information of the measured variables, where the current CVs are related to all the measured variable values up to the present moment, and CVs vary at each time instant.
	\item A varying block diagonal 
\alert{	$$
	\bar{\mathbf{H}}=\left[\begin{array}{cccccc}
		\mathbf{c}_s(0) &\mathbf{H}_0(0) & \mathbf{0}_{ n_u \times {(n_u+n_y)}} & \mathbf{0}_{ n_u \times {(n_u+n_y)}} & \cdots & \mathbf{0}_{ n_u \times {(n_u+n_y)}} \\
		\mathbf{c}_s(1) &\mathbf{0}_{ n_u \times {(n_u+n_y)}} & \mathbf{H}_1(1) & \mathbf{0}_{ n_u \times {(n_u+n_y)}} & \cdots & \mathbf{0}_{ n_u \times {(n_u+n_y)}} \\
		\vdots & \vdots & \vdots & & \ddots & \vdots \\
		\mathbf{c}_s(L-1) &\mathbf{0}_{ n_u \times {(n_u+n_y)}} & \mathbf{0}_{ n_u \times {(n_u+n_y)}} & \mathbf{0}_{ n_u \times {(n_u+n_y)}} & \cdots & \mathbf{H}_{L-1}(L-1)
	\end{array}\right]
	$$
	which is written as $\left[\bar{\mathbf{c}}_s \quad \operatorname{diag}[\mathbf{H}_0(0), \ldots, \mathbf{H}_{L-1}(L-1)]\right]$. 
	 $$\bar{{\mathbf{c}}}_s = \left[{\mathbf{c}}_s(0)^{\top},{\mathbf{c}}_s(1)^{\top},\dots,{\mathbf{c}}_s(L-1)^{\top}\right]^{\top} \in \mathbb{R}^{n_u L}$$ 
	 are stacked setpoints.
	In this structure, the CVs only consider the current measured variables, and the CVs also differ at each time instant. }
	\item A constant block diagonal matrix 
\alert{	$$
	\bar{\mathbf{H}} =
	\left[\begin{array}{cccccc}
		\mathbf{c}_s & \mathbf{H} & \mathbf{0}_{ n_u \times {(n_u+n_y)}} & \mathbf{0}_{ n_u \times {(n_u+n_y)}} & \cdots & \mathbf{0}_{ n_u \times {(n_u+n_y)}} \\
		\mathbf{c}_s & \mathbf{0}_{ n_u \times {(n_u+n_y)}} & \mathbf{H} & \mathbf{0}_{ n_u \times {(n_u+n_y)}} & \cdots & \mathbf{0}_{ n_u \times {(n_u+n_y)}} \\
		\vdots & \vdots & \vdots & & \ddots & \vdots \\
		\mathbf{c}_s & \mathbf{0}_{ n_u \times {(n_u+n_y)}} & \mathbf{0}_{ n_u \times {(n_u+n_y)}} & \mathbf{0}_{ n_u \times {(n_u+n_y)}} & \cdots & \mathbf{H}
	\end{array}\right] 
	$$
	where $ \mathbf{H} \in \Real^{ n_u \times {(n_u+n_y)}}$ is a time-invariant sub-matrix. This time, the individual CVs are
		$$
		\mathbf{c}(k) = \mathbf{H} \mathbf{\xi}(k) + \mathbf{c}_s
		$$
	As for this structure, it also only considers the current measured {variables} information, and for all time instances, the same CVs are applied with a shared set of parameters.}
	\end{enumerate}

The above CVs schemes could be considered as that the linear combination matrix of CVs has structural constraints \cite{Ye2018,Ye2022b}.

%
\subsection{Linear Representation of a Class of Structured Constraints}

In traditional SOC, such structural {constraints} (decentralized and triangular structures) are considered as nonlinear {constraints}\cite{Yelchuru2012}. 
Although some existing methods can address structural constraints for SOC in continuous processes \cite{Yelchuru2012,jafariConvexReformulationsSelfoptimizing2022}, handling structural constraints in batch processes is more challenging. This is because both diagonal and repetitive blocks in $\bar{\mathbf{H}}$ (Structure 3) are needed to enable simplicity and robustness of SOC \cite{Ye2022b}. Unfortunately, current techniques cannot handle these complex structural constraints for batch processes \cite{Ye2018}.

However, this work finds that a class of structural constraints are actually linear. As shown below, the linearity of these structural constraints becomes clear when represented in the vectorized form of $\bar{\mathbf{H}}$. 

Next, we introduce a general linear combination matrix $\mathbf{M} \in \mathbb{R}^{l \times w}$ for illustration. The vectorized form of $\mathbf{M}$ is defined as:
$$\operatorname {vec} (\mathbf{M}) = [h_{11}, h_{21}, \dots, h_{l1}, h_{12}, h_{22}, \dots, h_{l2}, \dots, h_{1w}, h_{2w}, \dots, h_{lw}]^{\top} \in \mathbb{R}^{lw}$$
where the columns of $\mathbf{M}$ are stacked one after another to form a column vector of length $lw$.

The structural constraints that may appear in batch process SOC problems can be categorized into two classes:	
{\begin{enumerate}
	\item 	specified elements, {\em i.e.} $h_{\alpha \beta}=b_{0}$, where $h_{\alpha \beta}$ is the $(\alpha,\beta)^{\mathrm{th}}$ element of $\mathbf M$ and $b_{0}$ is a specific constant, {\em e.g. $0$ or $1$}. 
	\item	repetitive elements, {\em i.e.} $h_{\alpha_1 \beta_1}=h_{\alpha_2 \beta_2}$, where $(\alpha_1 \beta_1)\neq (\alpha_2 \beta_2)$. 
\end{enumerate}}

Then, linearity of structural constraints is determined in the following lemma. 

\begin{lemma}
	\label{lem:QH=b}
	In $\operatorname {vec}(\mathbf{M})$, the above two classes of structural constraints are linear and can be expressed as follows:
	\begin{equation}
		\mathbf{Q}^{\top}\operatorname {vec} (\mathbf{M})=\mathbf{b}
	\end{equation}
	where $ \mathbf{Q} \in \Real^{{lw \times q }} $ is a matrix of $ \mathrm{rank}(\mathbf{Q})=q $ for total $q$ constraints, and $ \mathbf{b} \in \Real^{q} $ is a vector of known constants. $ \mathbf{Q}=\left[\mathbf{q}_1 \quad \mathbf{q}_2 \dots \mathbf{q}_q \right] $, $\mathbf{q}_j \in \mathbb{R}^{lw} $ and $ \mathbf{b}^{\top}=\left[b_1 \quad b_2 \dots b_q \right] $, ${b}_j \in \mathbb{R}$.
\end{lemma}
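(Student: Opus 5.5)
Since $\operatorname{vec}(\mathbf{M})$ is simply a rearrangement of the entries of $\mathbf{M}$, each entry can be written as a linear functional of it. The plan is to express every individual structural constraint as one linear equation in $\operatorname{vec}(\mathbf{M})$, stack these equations as the columns of $\mathbf{Q}$, and finally handle the rank condition.

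First, fix the index map. The entry $h_{\alpha\beta}$ sits at position $p(\alpha,\beta)=(\beta-1)l+\alpha$ of $\operatorname{vec}(\mathbf{M})$. Let $\mathbf{e}_p\in\Real^{lw}$ denote the $p$-th canonical basis vector. Then $h_{\alpha\beta}=\mathbf{e}_{p(\alpha,\beta)}^{\top}\operatorname{vec}(\mathbf{M})$.

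For a constraint of the first class, $h_{\alpha\beta}=b_0$, take $\mathbf{q}_j=\mathbf{e}_{p(\alpha,\beta)}$ and $b_j=b_0$. For a constraint of the second class, $h_{\alpha_1\beta_1}=h_{\alpha_2\beta_2}$, take $\mathbf{q}_j=\mathbf{e}_{p(\alpha_1,\beta_1)}-\mathbf{e}_{p(\alpha_2,\beta_2)}$ and $b_j=0$. This vector is nonzero because $(\alpha_1,\beta_1)\neq(\alpha_2,\beta_2)$ and $p$ is a bijection. Collecting all $q$ constraints gives $\mathbf{Q}=[\mathbf{q}_1\ \cdots\ \mathbf{q}_q]$ and $\mathbf{b}=[b_1\ \cdots\ b_q]^{\top}$, and the full constraint set is equivalent to $\mathbf{Q}^{\top}\operatorname{vec}(\mathbf{M})=\mathbf{b}$. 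As an illustration, the zero blocks in Structures 1--3 are first-class constraints with $b_0=0$, and the repeated blocks $\mathbf{H}$ and $\mathbf{c}_s$ in Structure 3 are second-class constraints.

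The main obstacle is the claim $\mathrm{rank}(\mathbf{Q})=q$. It fails if the list is redundant, for example if $h_1=h_2$, $h_2=h_3$ and $h_1=h_3$ are all included. I would resolve this by reduction: the constraint set is a consistent linear system, since it is assumed to come from a realizable structure, so one can discard columns of $\mathbf{Q}$ that are linear combinations of earlier ones, together with the corresponding entries of $\mathbf{b}$. Consistency ensures the discarded equations are implied by the retained ones, so the solution set is unchanged, and the retained columns are linearly independent. The statement is therefore read with $q$ counting this independent set.

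A canonical way to obtain such an independent set directly is as follows. Partition the entries into equivalence classes of entries forced equal by the repetition constraints. Within each class, use a spanning tree of differences relative to one representative, and add at most one value constraint per class. The resulting vectors have disjoint supports across classes and form a tree structure within each class, so they are linearly independent, which gives $\mathrm{rank}(\mathbf{Q})=q$.
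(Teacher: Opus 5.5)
Your proof is correct, and its core is the paper's construction: a fixed entry becomes a unit vector $\mathbf{e}_m$ with right-hand side $b$, and a repetition becomes $(\mathbf{e}_{m_1}-\mathbf{e}_{m_2})^{\top}\operatorname{vec}(\mathbf{M})=0$. The paper's proof stops there; you go further in two respects.

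First, your index $p(\alpha,\beta)=(\beta-1)l+\alpha$ is the right one for $\mathbf{M}\in\Real^{l\times w}$. The paper writes $(\beta-1)n_u+\alpha$, which is correct only when $\mathbf{M}$ has $n_u$ rows. That is not even the case for $\bar{\mathbf{H}}$, which has $n_uL$ rows.

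Second, and more substantively, the paper never justifies $\mathrm{rank}(\mathbf{Q})=q$. As you observe, this is false for a redundant list. Your greedy pruning and your equivalence-class/spanning-tree construction fill this gap. The pruning is valid because consistency forces $b_k=\sum_j c_j b_j$ whenever $\mathbf{q}_k=\sum_j c_j\mathbf{q}_j$.

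Your consistency assumption is not just convenient but necessary. If $\mathbf{Q}$ has full column rank, then $\mathbf{Q}^{\top}$ maps onto $\Real^{q}$, so $\mathbf{Q}^{\top}\operatorname{vec}(\mathbf{M})=\mathbf{b}$ is solvable for every $\mathbf{b}$. Hence an infeasible list such as $h_{11}=0$, $h_{12}=1$, $h_{11}=h_{12}$ can never be encoded with $\mathrm{rank}(\mathbf{Q})=q$.

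The one step you assert without reason is that a value constraint $\mathbf{e}_p$ is independent of the tree differences in its class. It is: every difference $\mathbf{e}_{p_1}-\mathbf{e}_{p_2}$ has coordinate sum zero over the class, while $\mathbf{e}_p$ has sum one. Consistency is also what guarantees that all specified values within one class coincide, so keeping one value constraint per class loses nothing.

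In short, the paper's argument buys brevity, while yours actually delivers the full-rank claim the lemma states.
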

\begin{proof}
	
	For ease of expression, we introduce the vector $\mathbf{e}_m \in \mathbb{R}^n$, which represents a column vector with all elements equal to 0, except the {$m$}-th element which is 1.
	{For the first class, the structural {constraints} can be expressed as $\mathbf{e}_m^T\operatorname{vec}(\mathbf{M}) = b$, where $m = (\beta-1)\cdot n_u + \alpha$ and $b \in \mathbb{R}$ is a known constant.	
	For the second class, the structural constraints is given by $(\mathbf{e}_{m_1} - \mathbf{e}_{m_2})^T\operatorname{vec}(\mathbf{M}) = 0$, where  $m_1 = (\beta_1-1)\cdot n_u + \alpha_1$ and $m_2 = (\beta_2-1)\cdot n_u + \alpha_2$. }

\end{proof}

Next, we will use an example to illustrate how to construct $\mathbf{Q}$. This type of structural constraint was previously considered non-convertible to a linear constraint in the literature\cite{Yelchuru2012}.
As an example, considering a process with 2 inputs (degrees of freedom) and 3 measurements with 2 partially disjoint measurement sets \{1, 2\},\{2, 3\}, and $ h_{12}=h_{22} $; the structure is
$$
\mathbf{M}_I=\left[\begin{array}{ccc}
h_{11} & h_{12} & h_{13} \\
h_{21} & h_{22} & h_{23}
\end{array}\right].
$$
Here $ h_{13}=h_{21}=0 $ is the first type of structural constraints, and $ h_{12}=h_{22} $ is the second type. Three $ \mathbf{q}_j $ need to be constructed. 
The vectorized $ \mathbf{M}_I $ is expressed as
$$
\operatorname {vec} (\mathbf{M}_I) = \left[\begin{array}{cccccc}
h_{11} & h_{21} & h_{12} & h_{22} & h_{13} & h_{23}
\end{array}\right]^{\top}.
$$
The vector $ \mathbf{q}_j $ and the element $ b_j $ that need to be generated in order to make $ h_{13}=h_{21}=0 $ {are as follows:}
$$
\mathbf{q}_1 = \left[\begin{array}{cccccc}
0 & 0 & 0 & 0 & 1 & 0
\end{array}\right]^{\top}, \quad b_1=0
$$
and
$$
\mathbf{q}_2 = \left[\begin{array}{cccccc}
0 & 1 & 0 & 0 & 0 & 0
\end{array}\right]^{\top} , \quad b_2=0.
$$
The vector { $ \mathbf{q}_j $ and the element $ b_j $} that need to be generated in order to make {$ h_{12}=h_{22} $} {are as follows:}
$$
\mathbf{q}_3 = \left[\begin{array}{cccccc}
0 & 0 & 1 & -1 & 0 & 0
\end{array}\right]^{\top}, \quad b_3=0.
$$

\begin{remark}
There are some structural constraints {that} still can not be expressed as a linear form, for example, to select $m$ variables from total $n_y$ measurements, which is NP-hard, hence cannot be solved in linear program or quadratic program.\cite{caoBidirectionalBranchBound2008,kariwalaBidirectionalBranchBound2009,kariwalaBidirectionalBranchBound2010}
\end{remark}

In conclusion, by reformulating the SOC problem with the vectorized linear combination matrix, { the above two types} structural constraints are linear.

\subsection{Global SOC problem for batch process}
To proceed similarly to the static case, we can also use the economic loss {generated due to a deviation from} perfectly optimal operation as a means to evaluate the performance of the controlled variables (CVs). Around the optimal control sequence $\bar{\mathbf{u}}^* = \left[\mathbf{u}^*(0)^{\top}, \mathbf{u}^*(1)^{\top}, \dots, \mathbf{u}^*(L-1)^{\top}\right]^{\top}$, the economic loss can be evaluated in a quadratic form in terms of CVs deviations $\bar{\mathbf{c}}$, similar to Eq.~\eqref{eq:dlossdc_y}, as follows:
\alert{\begin{equation}
	\label{eq:dlossdbarc}
	L \approx \dfrac{1}{2} \left(\bar{\mathbf{H}}\bar{\mathbf{\xi}}^{*}+\bar{\mathbf{H}}\bar{\mathbf{n}}\right)^{\top} \mathbf{J}_{\bar{c} \bar{c}}\left( \bar{\mathbf{H}}\bar{\mathbf{\xi}}^{*}+\bar{\mathbf{H}}\bar{\mathbf{n}}\right)
\end{equation}
where 
\begin{equation}
	\label{eq:jcc2juu_}
	\mathbf{J}_{\bar{c} \bar{c}}= (\bar{\mathbf{H}} \bar{\mathbf{G}}_{\xi})^{-\top}\mathbf{J}_{\bar{u} \bar{u}}(\bar{\mathbf{H}} \bar{\mathbf{G}}_{\xi})^{{-1}}
\end{equation}
\begin{equation}
	\label{eq:nuny}
	\bar{\mathbf{n}} = \left[ 0,\mathbf{n}(0)^{\top},\mathbf{n}(1)^{\top},\dots,\mathbf{n}(L-1)^{\top}\right]^{\top} \in \mathbb{R}^{(n_y + n_u) L +1}   ,   \mathbf{n}(j) = [\mathbf{n}_y(j)^{\top} \ \mathbf{n}_u(j)^{\top}]^{\top}   
\end{equation}
$ \mathbf{J}_{\bar{u} \bar{u}} \triangleq  \dfrac{\text{d}^2{J}}{\text{d}{\bar{\mathbf{u}}^2}}$ and $\bar{\mathbf{G}}_{\xi} \triangleq  \dfrac{\text{d}{\bar{\mathbf{\xi}}}}{\text{d}{\bar{\mathbf{u}}}} $ are defined as the total derivative, {followed} by eliminating state variables in the dynamic process model.
$$
\bar{\mathbf{c}} = \left[{\mathbf{c}}(0)^{\top},{\mathbf{c}}(1)^{\top},\dots,{\mathbf{c}}(L-1)^{\top}\right]^{\top} \in \mathbb{R}^{n_u L}
$$ 
$$
\bar{\mathbf{\xi}}^{*} = \left[ 1,  \mathbf{\xi}^{*}(0)^{\top},\mathbf{\xi}^{*}(1)^{\top},\dots,\mathbf{\xi}^{*}(L-1)^{\top}\right]^{\top} \in \mathbb{R}^{(n_y+n_u)L+1  } \qquad,
\mathbf{\xi}^{*}(k)^{\top} \in \mathbb{R}^{(n_y+n_u)  }
$$
denote stacked CVs, stacked optimal extend measurements and optimal extend measurements, respectively.}

It is worth noting that $\bar{\mathbf{G}}_{\xi} \in \mathbb{R}^{((n_y+n_u)L+1) \times n_u L}$ is a block lower triangular matrix, it is expressed as follows:
\alert{\begin{equation}
	\bar{\mathbf{G}}_{\xi}=
	 \left[\begin{array}{ccccc}
		\mathbf{0}_{1\times n_u} & \mathbf{0}_{1\times n_u}& \mathbf{0}_{1\times n_u}& \dots & \mathbf{0}_{1\times n_u} \\
		 \bar{\mathbf{G}}_{\xi,(0,0)} & \mathbf{0}_{  {(n_u+n_y)}\times n_u} & \mathbf{0}_{  {(n_u+n_y)}\times n_u} & \cdots & \mathbf{0}_{  {(n_u+n_y)}\times n_u} \\
		 \bar{\mathbf{G}}_{\xi,(1,0)} & \bar{\mathbf{G}}_{\xi,(1,1)} & \mathbf{0}_{  {(n_u+n_y)}\times n_u} & \cdots & \mathbf{0}_{  {(n_u+n_y)}\times n_u} \\
		 \bar{\mathbf{G}}_{\xi,(2,0)}  & \bar{\mathbf{G}}_{\xi,(2,1)}  & \bar{\mathbf{G}}_{\xi,(2,2)} & \ddots & \vdots \\
	     \vdots & \vdots & \vdots & \ddots &\mathbf{0}_{  {(n_u+n_y)}\times n_u}\\
		 \bar{\mathbf{G}}_{\xi,(L-1,0)} & \bar{\mathbf{G}}_{\xi,(L-1,1)} & \bar{\mathbf{G}}_{\xi,(L-1,2)} & \cdots & \bar{\mathbf{G}}_{\xi,(L-1,L-1)}
	\end{array}\right]
\end{equation}
}
and each of its blocks $\bar{\mathbf{G}}_{\xi,(k_1,k_2)}$ can be expressed as: 
\begin{equation}
	\bar{\mathbf{G}}_{\xi,(k_1,k_2)} =\dfrac{\text{d}{\mathbf{\xi}(k_1)}}{\text{d}{\mathbf{u}(k_2)}}=
	\begin{cases}
		\left[\mathbf{0}_{n_u \times n_y } \quad \mathbf{I}_{n_u}	\right]^\top	\quad , k_1=k_2 \\
		\left[\dfrac{\text{d}{\mathbf{y}(k_1)}}{\text{d}{\mathbf{u}(k_2)}}^\top \quad \mathbf{0}_{n_u \times n_u}\right]^\top  \quad , k_1>k_2\\
		\mathbf{0}_{(n_u+n_y) \times n_u} \quad , k_1<k_2
	\end{cases} , k_1,k_2 = 0,1,2,\dots,L-1
\end{equation}
where $ \bar{\mathbf{G}}_{\xi,(k_1,k_2)} $ denotes the derivative of the measurement at the $k_1$-th time to the input at the $k_2$-th time.
$\dfrac{\text{d}{\bar{\mathbf{c}}}}{\text{d}{\bar{\mathbf{u}}}} =  \bar{\mathbf{H}} \bar{\mathbf{G}}_{\xi} $ is also a block lower triangular matrix, each of blocks can be expressed as:
\begin{equation}
	\dfrac{\text{d}{\bar{\mathbf{c}}}}{\text{d}{\bar{\mathbf{u}}}} \bigg|_{(k_1,k_2)}  =
	\begin{cases}
		\mathbf{H}_{u,k_2}(k_1)	\quad , k_1=k_2 \\
		\mathbf{H}_{y,k_2}(k_1)\dfrac{\text{d}{\mathbf{y}(k_1)}}{\text{d}{\mathbf{u}(k_2)}}  \quad , k_1>k_2\\
		\mathbf{0}_{n_u\times n_u} \quad , k_1<k_2
	\end{cases} , k_1,k_2 = 0,1,2,\dots,L-1
\end{equation}
where $\dfrac{\text{d}{\bar{\mathbf{c}}}}{\text{d}{\bar{\mathbf{u}}}} \bigg|_{(k_1,k_2)} $ denotes  the $(k_1,k_2)$ block of $\bar{\mathbf{H}} \bar{\mathbf{G}}_{\xi}$.

By utilizing the available results in the static case, the simplified batch process global dynamic SOC problem can be formulated as follows: 
\begin{equation}
	\label{eq:dgSOC_opt}
	\begin{aligned}
		&\min _{\bar{\mathbf{H}}} \dfrac{1}{N}\sum_{i=1}^{N}L(\bar{\mathbf{d}}_i, \bar{\mathbf{n}}_i, \bar{\mathbf{H}}) = \dfrac{1}{N}\sum_{i=1}^{N}\left[ L^d_i+L^n_i \right] \\
		&\text { s.t. } \bar{\mathbf{H}}\mbox{ on the form of a particular structure}.
	\end{aligned}
\end{equation}
where 
\begin{equation}
	\label{eq:batchLdLD}
	\begin{aligned}
		L^d_i &=\dfrac{1}{2} \left(\bar{\mathbf{H}}\bar{\mathbf{\xi}}^{*}_i\right)^{\top} \mathbf{J}_{\bar{c} \bar{c},i}\left(\bar{\mathbf{H}}\bar{\mathbf{\xi}}^{*}_i\right)\\
		L^n_i &= \dfrac{1}{2} \mathrm{tr}(\bar{\mathbf{W}}^2\bar{\mathbf{H}}^{\top}\mathbf{J}_{\bar{c} \bar{c},i}\bar{\mathbf{H}})
	\end{aligned}
\end{equation}
where $ {\bar{\mathbf{W}}^2} = diag(0,\mathbf{I}_{L} \otimes \mathbf{W}^2 )  $ and $ \mathbf{W}^2 = E(\mathbf{n}\mathbf{n}^{\top}) $. 
It is assumed that measurement noises of the same sensor at different time instances are independently and identically distributed (i.i.d.). 

Additionally, to enable a linear representation of structural constraints, $\mathbf{H}$ should be vectorized. To achieve this goal, we first need to introduce the Kronecker product and make use of the following properties associated with it.

\begin{lemma}\cite{strang2016}
	\label{lem:decouple}
	$ \mathbf{A} $, $ \mathbf{B} $, $ \mathbf{C} $ and $ \mathbf{X} $ are given matrices, and  $ \mathbf{A} \mathbf{X} \mathbf{B} = \mathbf{C} $, then
	\begin{equation}
		\left(\mathbf {B} ^{\top}\otimes \mathbf {A} \right)\,\operatorname {vec} (\mathbf {X} )=\operatorname {vec} (\mathbf {AXB} )=\operatorname {vec} (\mathbf {C} ).
	\end{equation}
	where $ \otimes $ denotes Kronecker product, and $  \operatorname {vec} (\mathbf {X} ) $ denotes  the vectorization of the matrix $ \mathbf {X}  $, formed by stacking the columns of $ \mathbf {X}  $ into a single column vector.
\end{lemma}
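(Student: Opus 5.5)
The plan is to verify the identity entry by entry, reducing everything to the column-stacking definition of $\operatorname{vec}$ and the definition of the Kronecker product. Let $\mathbf{X}\in\Real^{p\times r}$, $\mathbf{A}\in\Real^{s\times p}$, $\mathbf{B}\in\Real^{r\times t}$, so that $\mathbf{C}=\mathbf{AXB}\in\Real^{s\times t}$. The natural route is through columns. The $j$-th column of $\mathbf{AXB}$ is
\[
\mathbf{A}\mathbf{X}\mathbf{b}_j=\sum_{k=1}^{r} b_{kj}\,\mathbf{A}\mathbf{x}_k,
\]
where $\mathbf{b}_j$ is the $j$-th column of $\mathbf{B}$ and $\mathbf{x}_k$ is the $k$-th column of $\mathbf{X}$. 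The second equality in the statement, $\operatorname{vec}(\mathbf{AXB})=\operatorname{vec}(\mathbf{C})$, is then immediate, since $\mathbf{AXB}=\mathbf{C}$ is assumed.

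The key steps, in order, are as follows.

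\textbf{Step 1 (column formula).} Write down the displayed column formula above.

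\textbf{Step 2 (stack the columns).} Stack these columns for $j=1,\dots,t$. The $j$-th block of length $s$ in $\operatorname{vec}(\mathbf{AXB})$ is the sum over $k$ of $b_{kj}\mathbf{A}$ applied to the $k$-th block $\mathbf{x}_k$ of $\operatorname{vec}(\mathbf{X})$.

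\textbf{Step 3 (match with the Kronecker product).} Compare with the block structure of $\mathbf{B}^{\top}\otimes\mathbf{A}$. By definition it is a $t\times r$ array of blocks, whose $(j,k)$ block is $(\mathbf{B}^{\top})_{jk}\mathbf{A}=b_{kj}\mathbf{A}$. Multiplying this block matrix by the stacked vector $\operatorname{vec}(\mathbf{X})=[\mathbf{x}_1^{\top},\dots,\mathbf{x}_r^{\top}]^{\top}$ gives, in block row $j$, exactly $\sum_k b_{kj}\mathbf{A}\mathbf{x}_k$. This coincides with the $j$-th column of $\mathbf{AXB}$, which proves the identity.

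The only real obstacle is bookkeeping: keeping the transpose on $\mathbf{B}$ straight. One must note that the $(j,k)$ block uses $b_{kj}$, not $b_{jk}$, and that this is precisely what column stacking requires. I would also check that the dimensions are consistent, namely that $(\mathbf{B}^{\top}\otimes\mathbf{A})\in\Real^{st\times pr}$ acts on $\operatorname{vec}(\mathbf{X})\in\Real^{pr}$ to give a vector in $\Real^{st}$. Since the paper cites this as a standard result of \citet{strang2016}, this elementary verification suffices.
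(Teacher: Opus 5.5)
Your proposal is correct. You identify the $(j,k)$ block of $\mathbf{B}^{\top}\otimes\mathbf{A}$ as $b_{kj}\mathbf{A}$ and match block row $j$ of $(\mathbf{B}^{\top}\otimes\mathbf{A})\operatorname{vec}(\mathbf{X})$ with the $j$-th column $\sum_k b_{kj}\mathbf{A}\mathbf{x}_k$ of $\mathbf{AXB}$, which is the standard argument, and your dimension check is right. The paper gives no proof of its own and only cites the textbook, so your column-wise verification is the argument that citation stands in for.
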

By applying Lemma \ref{lem:decouple} to Equation~\eqref{eq:batchLdLD}, we obtain the following result:
\begin{equation}
	\label{eq:LdLn_vec}
	\begin{aligned}
		L^d_i &= \dfrac{1}{2} \left\| \left(\bar{\mathbf{\xi}}_{i}^{*\top}\otimes \mathbf{J}_{\bar{c} \bar{c},i}^{1/2} \right) \operatorname {vec} (\mathbf{\bar{H}})   \right\|^2   \\
		L^n_i &=\frac{1}{2}	\operatorname{vec}(\mathbf{\bar{H}})^{\top}
		{\bar{\mathbf{W}}^2} \otimes \mathbf{J}_{\bar{c} \bar{c},{i} } 
		\operatorname{vec}(\mathbf{\bar{H}})\\
	\end{aligned}	
\end{equation}

The simplified dynamic self-optimizing control problem for batch processes based on the vectorized $\mathbf{\bar{H}}$ is:
{\begin{equation}
	\label{eq:dgSOC_opt_vecH}
	\begin{aligned}
		&\min _{\operatorname{vec}(\bar{\mathbf{H}})} \dfrac{1}{2N}\sum_{i=1}^{N} \left(\left\| \left(\bar{\mathbf{\xi}}_{i}^{*\top}\otimes \mathbf{J}_{\bar{c} \bar{c},i}^{1/2} \right) \operatorname {vec} (\mathbf{\bar{H}})   \right\|^2 +	\operatorname{vec}(\mathbf{\bar{H}})^{\top}
		{\bar{\mathbf{W}}^2} \otimes \mathbf{J}_{\bar{c} \bar{c},{i} } 
		\operatorname{vec}(\mathbf{\bar{H}}) \right)\\
		&\text { s.t. }  \mathbf{Q}_{str}^{\top}\operatorname {vec} (\bar{\mathbf{H}})=\mathbf{b}_{str}
	\end{aligned}
\end{equation}}
where $\mathbf{Q}_{str}^{\top}\operatorname {vec} (\bar{\mathbf{H}})=\mathbf{b}_{str}$ denotes the structural constraints and $\mathbf{Q}_{str} \in \mathbb{R}^{(n_uL+n_yL+1)n_uL \times q_{str}}$, $\mathbf{b}_{str} \in \mathbb{R}^{q_{str}}$ where $q_{str}$ is the number of the structural constraints.
At this point, gSOC has been extended to batch processes, and we can directly solve Problem \eqref{eq:dgSOC_opt} using existing numerical methods.

\subsection{A shortcut method of global dynamic SOC }
\label{sec:sc}
Like steady-state gSOC, the batch process problem is nonconvex with multiple local optima depending on initialization. For analytical convenience, an extra constraint $\mathbf{J}_{cc}=\mathbf{I}$ is typically imposed to convexify gSOC approximations. However, the inherently structural constraint in batch SOC, {\em e.g.} in Structures 2 and 3, conflicts with this extra shortcut constraint.
Therefore, the gSOC shortcut method in Section \ref{sec:gSOCsc} becomes invalid for batch process gSOC, necessitating a new approach. 
Per Lemma \ref{pro:QH}, optimal controlled variables are non-unique, so additional constraints are needed to improve solution stability without sacrificing optimality. At the same time, those constraints must not conflict with structural constraints. To propose the new shortcut used herein, we first derive the following Lemma :

\begin{lemma}
	\label{lem:diag}
	For the three structures considered in batch process SOC, there exists a diagonal and nonsingular matrix $\mathbf{\Lambda}$ and $\mathbf{\Lambda}$ is not an identity matrix, the structural constraint can be valid under a nonsingular transformation $\mathbf{H}'=\mathbf{\Lambda}\mathbf{H}$.
\end{lemma}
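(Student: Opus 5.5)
The plan is to give an explicit $\mathbf{\Lambda}$ that works simultaneously for all three structures, and then check case by case that left multiplication by it keeps the structural pattern. Let $\bar{\mathbf{H}}\in\Real^{n_uL\times((n_u+n_y)L+1)}$ be partitioned into $L$ block rows of height $n_u$; the $k$-th block row is $\mathbf{H}(k)$, padded with zeros. I take
\begin{equation*}
\mathbf{\Lambda}=\mathbf{I}_L\otimes\mathbf{D}, \qquad \mathbf{D}=\mathrm{diag}(\lambda_1,\dots,\lambda_{n_u}),\ \lambda_j\neq 0,
\end{equation*}
with some $\lambda_j\neq 1$, for instance $\mathbf{D}=\lambda\mathbf{I}_{n_u}$ with $\lambda\notin\{0,1\}$. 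This $\mathbf{\Lambda}$ is diagonal, nonsingular and not the identity. It rescales every CV channel $j$ by the same factor $\lambda_j$ at every time instant. In view of Lemma~\ref{pro:QH}, $\mathbf{H}'=\mathbf{\Lambda}\bar{\mathbf{H}}$ then gives the same loss.

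The key step is the elementwise observation. Because $\mathbf{\Lambda}$ is diagonal, $(\mathbf{\Lambda}\bar{\mathbf{H}})_{\alpha\beta}=\Lambda_{\alpha\alpha}h_{\alpha\beta}$, so each row is simply scaled. Two consequences follow.
\begin{enumerate}
\item Every zero-element constraint (first class in Lemma~\ref{lem:QH=b}, with $b_0=0$) is preserved. This already settles the zero patterns of Structure 1 (LBT) and Structure 2 (varying block diagonal).
\item For a repetitive constraint $h_{\alpha_1\beta_1}=h_{\alpha_2\beta_2}$, it suffices that $\Lambda_{\alpha_1\alpha_1}=\Lambda_{\alpha_2\alpha_2}$.
\end{enumerate}

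For Structure 3, every repetition links the entry in row $(k_1,j)$ of block row $k_1$ to the same entry in row $(k_2,j)$ of block row $k_2$. This applies both to the shared $\mathbf{H}$ blocks and to the shared setpoint column $\mathbf{c}_s$. Both rows carry the factor $\lambda_j$ under the Kronecker choice, so all repetitions survive. The transformed matrix therefore has the same form, with $\mathbf{H}$ replaced by $\mathbf{D}\mathbf{H}$ and $\mathbf{c}_s$ replaced by $\mathbf{D}\mathbf{c}_s$. In the vectorized language, I would also note that $\mathrm{vec}(\mathbf{\Lambda}\bar{\mathbf{H}})=(\mathbf{I}\otimes\mathbf{\Lambda})\mathrm{vec}(\bar{\mathbf{H}})$ by Lemma~\ref{lem:decouple}. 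Hence $\mathbf{Q}_{str}^\top\mathrm{vec}(\mathbf{H}')=\mathbf{0}$ whenever the homogeneous constraints hold for $\bar{\mathbf{H}}$, since each constraint column $\mathbf{q}_j$ is supported on entries scaled identically.

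The main obstacle is the non-homogeneous specified elements, i.e.\ $b_0\neq0$ such as a normalizing entry set to $1$, and the augmented first column. Scaling by $\lambda_j\neq1$ would violate a constraint $h_{\alpha\beta}=1$. I would handle this by noting that the three structures as defined only impose zero blocks and repetitions, so $\mathbf{b}_{str}=\mathbf{0}$ and the constraint set is a linear subspace invariant under $\mathbf{\Lambda}$. I would add a remark that if normalizations are later imposed, the freedom in $\mathbf{\Lambda}$ is exactly what they remove, which is what motivates the new shortcut constraint.

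I would also check that invertibility of the diagonal blocks $\mathbf{H}_{u,k}(k)$ needed in \eqref{eq:u=hy} is preserved. This holds because $\mathbf{H}'_{u,k}(k)=\mathbf{D}\mathbf{H}_{u,k}(k)$ and $\mathbf{D}$ is nonsingular.
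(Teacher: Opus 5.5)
Your proposal is correct and follows essentially the same argument as the paper: left multiplication by a nonsingular diagonal $\mathbf{\Lambda}$ scales each row, $h'_{\alpha\beta}=\lambda_\alpha h_{\alpha\beta}$. Hence the zero patterns of Structures 1 and 2 survive for any such $\mathbf{\Lambda}$, and the repetitions of Structure 3 survive once linked rows share the same factor. The paper secures that with a scalar multiple $\mu\mathbf{I}$, of which your $\mathbf{I}_L\otimes\mathbf{D}$ is a mild generalization; your explicit handling of the repetitive constraints and of $\mathbf{b}_{str}=\mathbf{0}$ makes the argument somewhat more careful than the paper's.
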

\begin{proof}
	$h_{\alpha\beta}$ denotes the $(\alpha,\beta)^{\mathrm{th}}$ element of $\mathbf H$ and $\lambda_{\alpha}$ denotes the $\alpha^{\mathrm{th}}$ diagonal element of $\mathbf{\Lambda}$.
	So the $(\alpha,\beta)^{\mathrm{th}}$ element of $\mathbf H '$ could be expressed as ${h'}_{\alpha\beta} =\lambda_{\alpha} h_{\alpha\beta}$.
	For structure 1 and 2, there exist structural constraints of specific positions with zero elements. Thus, for such architectures, left-multiplying by any nonsingular diagonal matrix preserves the validity of the structural constraints. That is if $h_{\alpha\beta}=\mathbf{0}_{ n_u \times {(n_u+n_y)}}$, ${h'}_{\alpha\beta} =\lambda_{\iota} h_{\alpha\beta} =0 $.
	
	For all three structures, left-multiplying by the matrix $\mu \mathbf{I}_{n_u}$ preserves the validity of the structural constraints, where $\mu$ is a nonzero constant, and $\mathbf{I}_{n_u} \in \Real^{n_u \times n_u}$ is an identity matrix.
\end{proof}


According to Lemma~\ref{lem:diag} and considering the characteristics of batch processes, we set the controlled variables in the following form:
\begin{equation}
	\mathbf{c}(k) = \left([\mathbf{c}_s(k) \quad \mathbf{H}_{[0:k-1]}(k)]\bar{\mathbf{\xi}}(k-1)+\mathbf{H}_{y,k}(k) \mathbf{y}(k)\right) - \mathbf{u}(k), k=0,1,...,L-1
\end{equation}
that is, with {$\mathbf{H}_{u,k}(k) = -\mathbf{I}_{n_u}$} fixed.
According to Lemma \ref{lem:QH=b}, this additional constraint can also be expressed as linear, which is denoted as follows:
\begin{equation}
	\mathbf{Q}_{add}^{\top}\operatorname {vec} (\bar{\mathbf{H}})=\mathbf{b}_{add}
\end{equation}
where $\mathbf{Q}_{add} \in \mathbb{R}^{(n_uL+n_yL+1)n_uL \times q_{add}}$, $\mathbf{b}_{add} \in \mathbb{R}^{q_{add}}$ where $q_{add} = n_u n_u L$.

As per Lemma \ref{pro:QH}, this does not compromise the performance of the controlled variables. Additionally, fixing {$\mathbf{H}_{u,k}(k)$} in this way ensures that $\mathbf{H}_{u,k}(k)$ is invertible, thereby allowing the control inputs $\mathbf{u}(k)$ to be recovered from the controlled variables.

Nevertheless, this simplification in Problem\eqref{eq:gSOC_opt_short} is very crude, since in reality, both $ \mathbf{J}_{u u} $ and $ \mathbf{G}_{{{y}}}$ are not constant across the operating window. Hence, a novel {and} gentler approximation is proposed herein:
\begin{equation}
	\mathbf{J}_{cc,i} \approx \mathbf{V}^{\top} \mathbf{J}_{u u,i} \mathbf{V}  \quad i = 1,2,\dots,N
\end{equation}
where  $\mathbf{V} \triangleq \mathbf{J}_{u u,0}^{-1/2} $. {The subscript $i$ denotes the $i$-th sample and $i=0$ denotes a nominal point sample which is pre-selected in advance.}
It is worth noting that the approximation used here is similar to the previous method in Problem~\eqref{eq:gSOC_opt_short}, but not identical. 
In Problem~\eqref{eq:gSOC_opt_short}, it relaxes one restriction by treating $\mathbf{J}_{cc}$ {as a} constant {matrix} over all disturbance scenarios, same as local SOC methods.
But in here, we just ignore the variation of $ (\mathbf{H} \mathbf{G}_{{y}})^{-1} $ which is one of the two nonlinear components of $ \mathbf{J}_{c c} $. $ \mathbf{J}_{u u} $, another nonlinear components of $ \mathbf{J}_{c c} $, is retained.

Accordingly, in batch process SOC, we approximate $J_{\bar{c}\bar{c}}$ as:
\begin{equation}
	\mathbf{J}_{\bar{c}\bar{c},i} \approx \mathbf{\bar{V}}^{\top} \mathbf{J}_{\bar{u} \bar{u},i} \mathbf{\bar{V}}  \quad i = 1,2,\dots,N
\end{equation}
where  $\mathbf{\bar{V}} \triangleq \mathbf{J}_{\bar{u} \bar{u},0}^{-1/2} $.

\alert{For convenience, we introduce the data matrix containing all sample data as
$$
\tilde{\mathbf{\Xi}} = \left[ \tilde{\mathbf{\xi}}_{1}^{\top} \ \tilde{\mathbf{\xi}}_{2}^{\top} \dots \tilde{\mathbf{\xi}}_{N}^{\top} \right]^{\top} \in \mathbb{R}^{ Nn_uL \times n_u L (1+n_u L+n_y L)}
$$
whose $i^\text{th}$ row block corresponds to
$$
\tilde{\mathbf{\xi}}_{i} =\bar{\mathbf{\xi}}_{i}^{*\top}\otimes \mathbf{J}_{\bar{u}\bar{u},i}^{1/2}\mathbf{\bar{V}} \in  \mathbb{R}^{ n_uL \times n_u L (1+n_u L+n_y L)}
$$
 at the optimal status under $\bar{{\mathbf{d}}}_i$. It can be verified that the $E(L^d)$ equals
\begin{equation}
	E(L^d) = \dfrac{1}{2N}\sum_{i=1}^{N} \left\| \left(\bar{\mathbf{\xi}}_{i}^{*\top}\otimes \mathbf{J}_{\bar{c} \bar{c},i}^{1/2} \right) \operatorname {vec} (\mathbf{\bar{H}})   \right\|^2 = \frac{1}{2N} \left\| \tilde{\mathbf{\Xi}} \operatorname {vec} (\bar{\mathbf{H}}) \right\|_{\mathrm{2}}^{2} .
\end{equation}}

\alert{Similarly, $E(L^n)$ is equivalent to:
$$E(L^n)  = \dfrac{1}{2N}\sum_{i=1}^{N}	\operatorname{vec}(\mathbf{\bar{H}})^{\top}
{\bar{\mathbf{W}}^2} \otimes \mathbf{J}_{\bar{c} \bar{c},{i} } 
\operatorname{vec}(\mathbf{\bar{H}}) =\frac{1}{2N}\left\| \breve{\mathbf{W}}{\operatorname {vec} (\bar{\mathbf{H}})}\right\|_{\mathrm{2}}^{2},
$$
where 
$$  \breve{\mathbf{W}} = \left( {\bar{\mathbf{W}}^2} \otimes \mathbf{\bar{V}}^{\top}\left(\sum_{i=1}^{N}\mathbf{J}_{\bar{u} \bar{u},i}\right)\mathbf{\bar{V}} \right)^{1/2} \in  \mathbb{R}^{n_u L (1+n_u L+n_y L)\times n_u L (1+n_u L+n_y L)} .$$}

\alert{In summary, we propose the following shortcut method for batch process global dynamic self-optimizing control(gdSOCsc): 
\begin{equation}
	\label{eq:re_gSOC_opt_batch}
	\begin{aligned}
		\min _{\bar{\mathbf{H}}} \overline{\bar{L}}_{\mathrm{gav}}(\bar{\mathbf{H}})
		&=\frac{1}{2N} \left\| \tilde{\mathbf{\Xi}} \operatorname {vec} (\bar{\mathbf{H}}) \right\|_{\mathrm{2}}^{2} +
		\frac{1}{2N}\left\| \breve{\mathbf{W}}{\operatorname {vec} (\bar{\mathbf{H}})}\right\|_{\mathrm{2}}^{2}\\
		&=\frac{1}{2N}\left\|\bar{\tilde{\mathbf{\Xi}}}\operatorname {vec} (\bar{\mathbf{H}})\right\|_{\mathrm{2}}^{2} \\
		&\text{s.t. }\quad \mathbf{Q}_{all}^{\top}\operatorname {vec} (\bar{\mathbf{H}})=\mathbf{b}_{all}
	\end{aligned} ,
\end{equation}}
where 
$$
\bar{\tilde{\mathbf{\Xi}}} = \left[
\begin{array}{c}
	{\tilde{\mathbf{\Xi}}} \\
	\breve{\mathbf{W}}
\end{array}
\right]
\quad
\mathbf{Q}_{all}=
\left[
\begin{array}{c}
	\mathbf{Q}_{str} \\
	\mathbf{Q}_{add}
\end{array}
\right]
\quad
\mathbf{b}_{all}=
\left[
\begin{array}{c}
	\mathbf{b}_{str} \\
	\mathbf{b}_{add}
\end{array}
\right] .
$$

The analytical solution of Problem~\eqref{eq:re_gSOC_opt_batch} is
\begin{equation}
	\operatorname{vec}(\bar{\mathbf{H}}) =  \left(\tilde{\mathbf{\Xi}}^{\top}\tilde{\mathbf{\Xi}}\right)^{-1}\mathbf{Q}_{all}\left(\mathbf{Q}_{all}^{\top}\left(\tilde{\mathbf{\Xi}}^{\top}\tilde{\mathbf{\Xi}}\right)^{-1}\mathbf{Q}_{all}\right)^{-1}\mathbf{b}_{all}.
\end{equation}

\section{Case studies}

\subsection{Process description}
The proposed gdSOC is applied to a fed-batch reactor, which comprises two reactions \cite{gros2009,Oliveira2016}: $ A + B \rightarrow C $ and $ 2B \rightarrow D $. Here, A and B represent the reactants, C is the desired product, and D is the byproduct generated by the side reaction.
A simple dynamical model for the system is derived by applying mass balance principles:
\begin{align}	
	\label{eq:PM1}	& \frac{d c_A}{d t}=-k_1 c_A c_B-c_A u / V, \quad c_A(0)=c_{A 0} \\
	\label{eq:PM2}	& \frac{d c_B}{d t}=-k_1 c_A c_B-2 k_2 c_B^2-\left(c_B-c_B^{i n}\right) u / V, \quad c_B(0)=c_{B 0} \\
	\label{eq:PM3}	& \frac{d V}{d t}=u, \quad V(0)=V_0 \\
	\label{eq:PM4}	& c_C=\left(c_{A 0} V_0-c_A V\right) / V \\
	\label{eq:PM5}	& c_D=\left(\left(c_A+c_{B i n}-c_B\right) V-\left(c_{A 0}+c_{B i n}-c_{B 0}\right)\right) / 2 V
\end{align}
where \(c_A\) represents the concentration of species A, \(c_B\) represents the concentration of species B, \(k_1\) and  \(k_2\) is the kinetic coefficient, \(u\) is the flow rate, and \(V\) is the volume. \(c_{A0}\) represents the initial concentration of species A.  \(c_B^{in}\) is the inlet concentration of B. where \(c_C\) represents the concentration of species C and \(c_D\) represents the concentration of species D.

\begin{table}[htbp]
	\centering
	\caption{Variable Description}
	\label{tbl:PM}
	\begin{tabular}{llll}
		\hline
		Variable & Description & Nominal Value  & Unit \\
		\hline
		$c_{A0}$ & initial concentration (A) & 0.72  & mol/l \\
		$c_{B0}$ & initial concentration (B) & 0.0614 &  mol/l \\
		$V_{0}$ & initial volume & 1 & l \\
		$k_{1}$ & kinetic coefficient (main) & 0.053 & l/(mol$\cdot$min) \\
		$k_{2}$ & kinetic coefficient (side) & 0.128 & l/(mol$\cdot$min) \\
		$c_{B}^{\text{in}}$ & inlet concentration of B & 5 & mol/l \\
		$t_{f}$ & batch duration & 250 & min \\
		\hline
	\end{tabular}
\end{table}
The operational objective is to maximize the amount of product C whilst minimizing the byproduct D at the final batch time \(t_f\), by manipulating the feed rate of reactant B, \(u(t)\), which is constrained within the bound \(0 \leq u(t) \leq 0.005\) [l/min]. The batch time is fixed at \(t_f = 250\) min. Therefore, a dynamic optimization problem is formulated as follows:
\begin{equation}
	\begin{aligned}
		&\min_{u(t)} J = [c_D(t_f) - c_C(t_f)]V(t_f) \\
		&\text{s.t.} \mbox{ process model \eqref{eq:PM1}-\eqref{eq:PM5}}\\
		&\quad \quad 0 \leq u(t) \leq 0.005
	\end{aligned}
\end{equation}

The process parameters, together with their nominal values, are given in Table~\ref{tbl:PM}. 
Firstly, by parameterizing the control vector with piecewise constant inputs and $L=20$, the problem is transformed into a static optimization problem: 
\begin{equation}
	\begin{aligned}
		&\min_{\bar{\mathbf{u}}} J = [c_D(L) - c_C(L)]V(L) \\
		&\text{s.t.} \ \bar{\mathbf{x}} = \mathbf{f}(\bar{\mathbf{u}},\bar{{\mathbf{d}}}) \\
		&\quad \quad 0 \leq u(k) \leq 0.005 \quad k = 0,2,\dots,L
	\end{aligned}
\end{equation}
where $\bar{\mathbf{u}} = \left[u(0) \ u(1) \dots \ u(L-1)\right]^{\top}$, $ \mathbf{x}(k) = \left[c_A(k) \ c_B(k) \ V(k)\right]^{\top} $ and $\bar{\mathbf{x}} = \left[\mathbf{x}(1)^{\top} \ \mathbf{x}(2)^{\top} \dots \ \mathbf{x}(L)^{\top}\right]^{\top}$.
$\mathbf{x}(k+1) = \mathbf{f}_k(\mathbf{x}(k),{u}(k),{\mathbf{d}}(k))$
and $ \mathbf{f}_k(\mathbf{x}(k),{u}(k),{\mathbf{d}}(k)) = \int_{k t_s}^{(k+1)t_s} f(\mathbf{x}(k),{u}(k),{\mathbf{d}}(k))d{t} $, where $f(.)$ denotes the process model\eqref{eq:PM1}-\eqref{eq:PM3}, $t_s=250/20 \text{ min}$.

The considered process measurements including the three system states are represented as:
\begin{equation}
	\mathbf{y}(k) = [c_A(k) \ c_B(k) \ V(k) ]^{\top}
\end{equation}
Gaussian noise with zero mean is assumed to be present in all measurements. The standard deviations for concentrations($ c_A(k) \ c_B(k) $) and volume($ V(k) $) are 0.03 mol/L and 0.1 L, respectively. 
To account for potential implementation errors in the manipulated variable, a small noise of 0.025 ml/min is included in the representation of $ u(t) $, enhancing the realism of the model.

In order to validate the proposed global dynamic self-optimizing control method, we designed the following 3 cases to test its applicability and effectiveness.
\begin{enumerate}[label=\textbf{Case} \arabic*,itemindent=50 pt]
	
	\item $\mathbf{d}=[c_{A0} \ c_{B0} \ V_{0} \ k_1 \ k_2]^{\top}$ with 10\% disturbances to compare the performance of different self-optimizing control methods under this disturbance.
	
	\item $\mathbf{d}=[c_{A0} \ c_{B0} \ V_{0} \ k_1 \ k_2]^{\top}$ with separately 5\%, 10\%, 20\% disturbances to compare the performance of various self-optimizing control methods under different disturbance levels.
	
	\item $\mathbf{d}=[c_{A0} \ c_{B0} \ V_{0} \ k_1 \ k_2]^{\top}$ with 10\% disturbances to obtain the self-optimizing controlled variables. Then two additional cases were tested to evaluate the controlled variable performance under unknown disturbances:	
	$\mathbf{d}=[c_{A0} \ c_{B0} \ V_{0} \ k_1 \ k_2]^{\top}$ with 20\% disturbances and
	$\mathbf{d}=[c_{A0} \ c_{B0} \ V_{0} \ k_1 \ k_2 \ c_{B}^{in}]^{\top}$ with 20\% disturbances.
	
\end{enumerate}

\subsection{Results and simulations}
\textbf{Case 1: Comparing global and local methods}
The initial values of the three states and the reaction kinetics parameters are uncertain and are defined as uniformly distributed fluctuations within $\pm{10\%}  $ of the nominal point.

Within the uncertainty range, Monte Carlo sampling is employed to randomly generate 200 scenarios. The first 100 samples are used for designing the controlled variable, while the remaining 100 samples are used for evaluating the performance of the controlled variable. 
Furthermore, for each scenario, $ \mathbf{J}_{\bar{u}\bar{u}} $ and $ \bar{\mathbf{G}}_{\xi} $ are calculated. In this case, we only consider Structure 3, which represents the time-invariant controlled variable, in order to compare the differences between different methods: local dynamic SOC(ldSOC)\cite{Ye2018}, global dynamic SOC(gdSOC) and shortcut method of global dynamic SOC(gdSOCsc).
Based on these three methods, the linear combination matrices $\mathbf{H}$ are given as follows:
\alert{$$
\mathbf{H}_{ld} = [ \  -1.1177\times 10^{-3} \  8.4840\times 10^{-3} \  -4.9812\times 10^{-4} \  -1], c_{s,ld} = 6.4777\times 10^{-5}
$$
$$
\mathbf{H}_{gd} = [ 7.4750\times 10^{-4} \  -3.1080\times 10^{-4} \  3.4359e\times 10^{-4} \  -1] , c_{s,gd} =  -1.8894\times 10^{-4}
$$
$$
\mathbf{H}_{gdsc} = [ 5.9681\times 10^{-4} \  8.3775\times 10^{-5} \  1.4724\times 10^{-4} \  -1] c_{s,gdsc} = 6.6631\times 10^{-5} 
$$}
where subscripts indicate different methods.
\begin{table}[h]
	\centering
	\caption{Average Loss Evaluations with 100 scenarios of Random Disturbances($\pm{10\%}  $) and Measurement Errors under Structure 3} 
	\begin{tabular}{cccc}
		\hline
		& Local       & \multicolumn{2}{c}{Global} \\
		& {ldSOC} & {gdSOC} & {gdSOCsc} \\
		\hline
		Structure 3 & 0.01018 &	0.00730 &	0.00743
		\\
		\hline
	\end{tabular}
	\label{tb:10Loss}
\end{table}
The average economic loss for the three methods across 100 test scenarios is shown in Table~\ref{tb:10Loss}.

The results in Table 2 demonstrate that the global SOC method (gdSOC) achieves substantially lower economic loss compared to the local SOC method (ldSOC). This significant difference highlights the importance of accounting for nonlinearity through a global approach when designing controlled variables for batch processes.
Furthermore, the comparable performance between the gdSOC method and the shortcut gdSOC method (gdSOCsc) provides justification for the approximations made in the shortcut approach. The shortcut allows analytical solutions to be obtained while retaining good optimization capability.

\begin{figure}[h]
	\centering

	\begin{subfigure}[b]{0.45\textwidth}
		\centering
		\includegraphics[width=\textwidth]{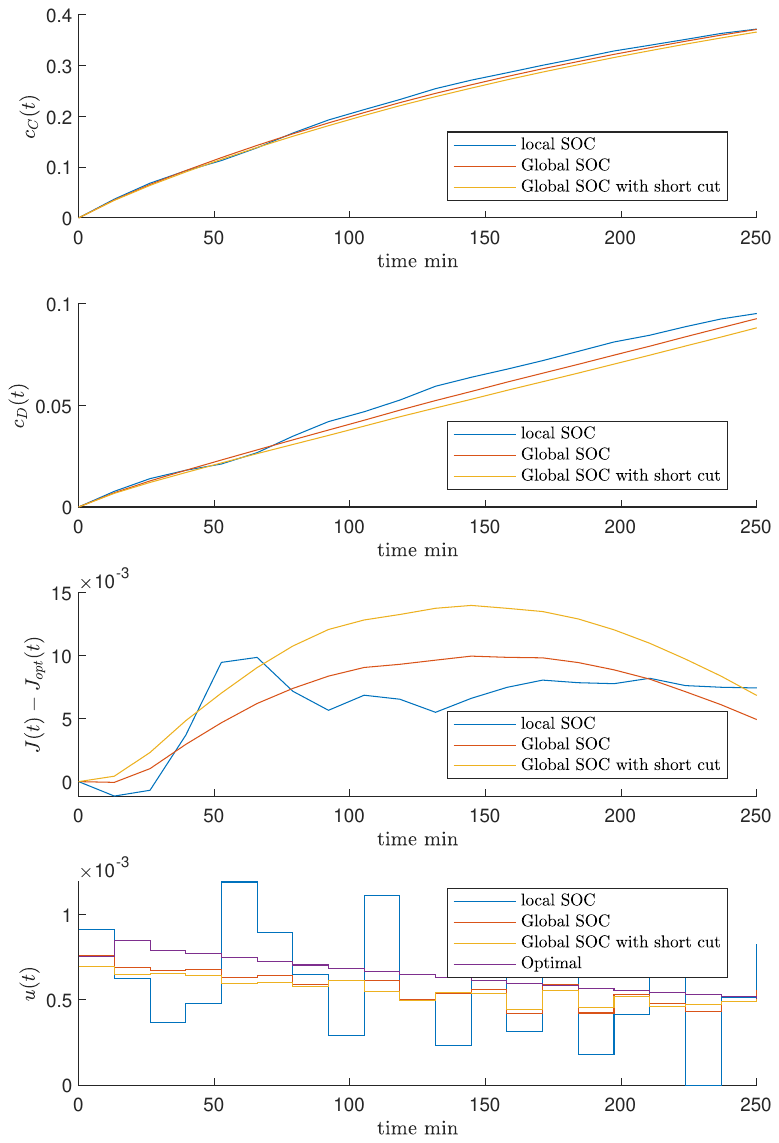} 
		\caption{$+10\%$ disturbance}
	\end{subfigure}
	\begin{subfigure}[b]{0.45\textwidth}
		\centering
		\includegraphics[width=\textwidth]{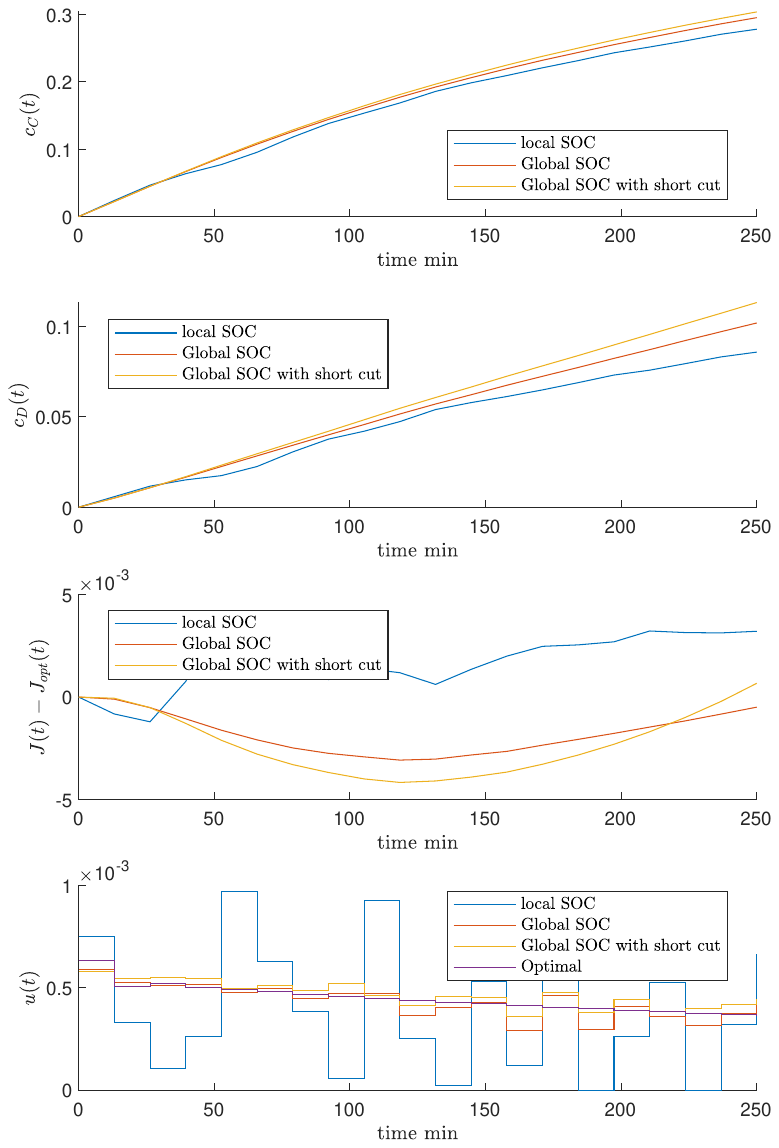} 
		\caption{$-10\%$ disturbance}
	\end{subfigure}	
	\caption{Dynamic simulation}
	\label{fig:10}	
\end{figure}
{To more intuitively demonstrate the effects of the three algorithms, we present the results of dynamic simulations with $\pm 10\%$ disturbances in Fig.~\ref{fig:10}. It can be observed that compared to the local method, the control input trajectories of the global methods are closer to the optimal input trajectories. The economic performances of the global methods at the final time step are also better than the local method.}

\begin{figure}[h]
	\includegraphics{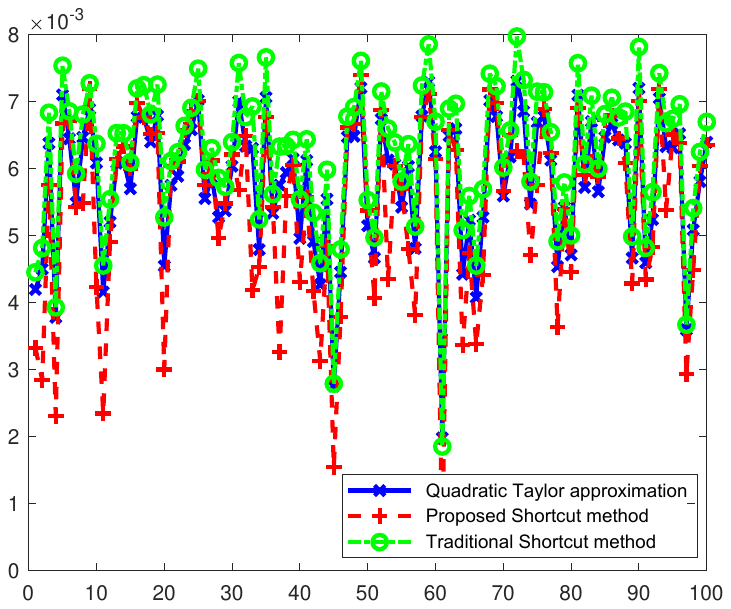} 
	\caption{Comparison of Approximation Errors of Different Approximation Methods} \label{fig:cp}		
\end{figure}

To further analyze the effectiveness of the global and shortcut methods, we contrast the approximation errors of different methods over 100 scenarios in Fig. \ref{fig:cp}.
In Fig~\ref{fig:cp}, the blue line shows the approximation error of the quadratic Taylor approximation of the economic loss which is calculated by Eq.~\ref{eq:qsloss1} at each sample.
The red line represents the approximation error of the shortcut quadratic Taylor approximation proposed in this paper which is the objective function in Eq.~\ref{eq:re_gSOC_opt_batch} . The green line denotes the approximation error obtained using traditional shortcut methods.

There are a few key observations:
\begin{enumerate}
	\item The proposed simplification (red) consistently exhibits the lowest approximation error compared to the other two methods.
	\item The traditional shortcut method (green) performs worst overall, with large errors due to its overly aggressive constraints.
\end{enumerate}
In summary, these results highlight the importance of careful, moderate simplifications when designing SOC methods for nonlinear batch processes. The proposed gdSOCsc effectively balances model accuracy and analytical tractability.

\begin{table}[h]
	\centering
	\caption{The absolute value of the Loss approximation error with 100 scenarios}
	\begin{tabular}{cccc}
		\hline
		& Local       & \multicolumn{2}{c}{Global} \\
		& {ldSOC}\cite{Ye2018} & {gdSOC} & {gdSOCsc} \\
		\hline
		average            & 0.006212   & 0.005816 & 0.005447        \\
		standard deviation & 0.001090   & 0.001024 & 0.001411       \\
		\hline
	\end{tabular}
	\label{tb:lossMSTD}
\end{table}
\begin{figure}[h]
	\centering
	\includegraphics{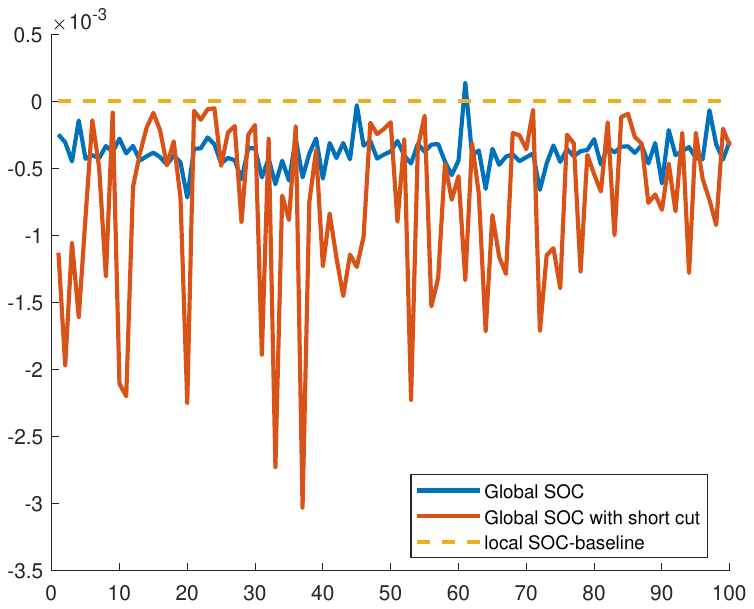} 
	\caption{Comparison of approximation error of global method and approximation error of local method}
	\label{fig:abs}
\end{figure}
To further quantify the approximation errors, Table~\ref{tb:lossMSTD} compares the absolute average and standard deviation of the errors. 
The gdSOC global method achieves a lower average error than the ldSOC local method, again showing the benefits of a global approach.
{Meanwhile, in Fig. \ref{fig:abs}, taking the local method's approximation error as a baseline, the approximation errors of the two global methods and the local method under each disturbance scenario are evaluated. The blue line in Fig. \ref{fig:abs} denotes the gdSOC approach, and the orange line signifies the gdSOCsc approach. It can be observed that the approximation errors of the two global methods are lower than those of the local method in almost all scenarios, although the differences between the global and local methods' approximation errors are minor.}
However, the gdSOCsc shortcut has the highest standard deviation, indicating significant variability in its performance across different scenarios. This implies potential issues with stability and robustness compared to the gdSOC approach.
The gdSOC strikes a balance with both low average error and standard deviation, proposing it provides the optimum combination of accuracy and reliability.

Overall, the numerical results on approximation errors provide solid evidence that:
\begin{enumerate}[]
	\item Global methods like gdSOC outperform local methods by better capturing nonlinearity.
	\item The proposed gdSOCsc shortcut approach offers computational simplicity but may sacrifice robustness.
	\item gdSOC delivers the most favorable balance between accuracy, stability and tractability.
\end{enumerate}
These findings emphasize the importance of holistic evaluation in assessing SOC techniques for batch processes. While shortcuts enable analytical solutions, moderate approximations are necessary to maintain model validity. The gdSOC method effectively navigates this trade-off for high-quality nonlinear batch process control.

\textbf{Case 2: Evaluating Uncertainty Scalability} 
In this case, we expanded the range of variation for the reaction kinetics coefficients from 5\% to 20\%.
This case aims to analyze the scalability of the gdSOC and gdSOCsc methods with broader uncertainty ranges. As noted in the background, batch processes inherently exhibit strong nonlinear dynamics across large operating spaces. It is therefore critical to evaluate if global SOC techniques can maintain performance under increasing uncertainties. {The $\mathbf{H}$ utilized in this case is consistent with case 1. }

\begin{table}[h]
	\centering
	\caption{The absolute value of the Loss approximation error with 100 scenarios}
	\begin{tabular}{cccc}
		\hline
		variation & ldSOC & {gdSOC} & {gdSOCsc} \\
		\hline
		5\%  & 0.009996 &  0.006835 & 0.006842 \\
		10\% & 0.010177  &0.007304 & 0.007426 \\
		20\% & 0.010855 &0.007735 & 0.007821 \\
		40\% & 0.013546 & 0.011245 & 0.012125 \\
		\hline
	\end{tabular}
	\label{tb:lossdd}
\end{table}
The results in Table \ref{tb:lossdd} indicate that larger uncertainty ranges lead to progressively higher economic losses for ldSOC, gdSOC and gdSOCsc, as expected. Specifically, we can see that gdSOC and gdSOCsc outperform ldSOC across all variation levels, demonstrating their improved robustness in handling uncertainties. The difference in performance becomes more pronounced as the variation level increases - at 40\% variation, gdSOC and gdSOCsc have approx. 20\% lower loss than ldSOC. Between gdSOC and gdSOCsc, the differences are minor, implying the comparable  performance between gdSOC and gdSOCsc. Overall, the results validate the superiority of the proposed gdSOC and gdSOCsc over conventional ldSOC, especially for problems with considerable uncertainties.


\textbf{Case 3:  Simplicity versus Robustness}
In previous research on batch process self-optimizing control\cite{Ye2018,Ye2022b}, it is believed that Structure 1 performs better than Structure 2, and Structure 2 performs better than Structure 3 in terms of economic performances. This is easily understandable as batch processes exhibit time-varying characteristics, and designing control strategies that correspond to these variations is natural. However, in practice, switching between control structures can introduce additional fluctuations. Additionally, if unmodeled dynamic are considered, Structure 3 might exhibit better robustness.
This case explores if using simpler, time-invariant CV structures can improve robustness to unmodeled disturbances, compared to more complex time-varying CVs. 

In this case, we use data with initial state and {kinetic coefficient} fluctuations of $ \pm 10\% $ to design the controlled variables. The linear combination matrices for the three structures are obtained through the gdSOCsc method.{ The results are as follows:}
$$
\mathbf{\bar{H}}_{s1} =  \mathrm{LBT}[\mathbf{H}_{s1}(0), \mathbf{H}_{s1}(1), \ldots, \mathbf{H}_{s1}(N-1)],
$$
where
$$
\begin{aligned}
	&\mathbf{H}_{s1}(0) = [-6.8228\times 10^{-4} \  1.6249\times 10^{-3} \  -1.9691\times 10^{-4} \  1.6716\times 10^{-4} \  -1] \\
	&\mathbf{H}_{s1}(1) =[-2.7810\	5.1044\	0.60853\	1.3657\ 	3.8392\	0.58668\	1.3982\	-1\times 10^{4}] \times 10^{-4}\\
	&...
\end{aligned}
$$
$$\mathbf{\bar{H}}_{s2} =[\bar{{\mathbf{c}}}_s,\ \operatorname{diag}[\mathbf{H}_{0,s2}(0), \ldots, \mathbf{H}_{N-1,s2}(N-1)]] $$
where 
$$
\begin{aligned}
	&c_{s2,0} =-6.7069 \times 10^{-4} ,\ \mathbf{H}_{0,s2}(0) = [	16.123\	-1.9709\	1.6506\	-1\times 10^{4}]\times 10^{-4}  \\
	&c_{s2,0} =-0.29678\times 10^{-4} ,\ \mathbf{H}_{1,s2}(1) = [ 7.1019\	0.84467\	1.7298\	-1\times 10^{4}]\times 10^{-4} \\
	&...
\end{aligned}
$$
$$
  \mathbf{H}_{s3} = [\  5.9681\times 10^{-4} \  8.3775\times 10^{-5} \  1.4724\times 10^{-4} \  -1],\
c_{s3} = 6.6631\times 10^{-5}
$$
where subscripts $s1,\ s2,\ s3$ denote the different structures.

We consider testing the performance of the controlled variables in two scenarios:
\begin{enumerate}
	\item Larger uncertainties: there is a $ \pm 20\% $ fluctuation range of initial state and kinetic parameters in testing scenarios, which indicates that the actual range of uncertainty is larger than considered.
	\item Unaccounted uncertainty: \(c_B^{in}\) is treated as an unaccounted uncertainty .
	Its range of variation is $ \pm 20\% $ around the nominal point.
\end{enumerate}

\begin{table}[h]
	\begin{tabular}{cccc}
		\hline
		& Structure 1 & Structure 2 & Structure 3 \\
		\hline
		greater disturbance     & 0.007945 & 0.007832    & 0.007959    \\
		unaccounted disturbance & 0.007194  & 0.007082    & 0.007111   \\
		\hline
	\end{tabular}
	\caption{The economic loss of controlled variables with different structures under unexpected disturbance}
	\label{tab:unexpectD}
\end{table}
Table \ref{tab:unexpectD} shows the economic loss of three CV structures under greater and unaccounted disturbances. We can see that Structure 2 achieves the lowest loss under both disturbance scenarios. This indicates Structure 2 is the most robust design among the three structures.
However, it should be noted that Structure 3 uses the simplest fixed CVs, yet its performance is comparable to Structure 2. In terms of practical implementation difficulty, Structure 3 would be the easiest to deploy. This suggests there are merits to adopting the simplest Structure 3, as it can achieve adequate robustness while requiring minimal implementation effort.
The key takeaway is that pursuing complex CV tuning does not necessarily improve robustness against uncertainties. Structures 1 and 2 use more involved time-varying CVs based on nominal conditions, yet their advantage over the basic Structure 3 is marginal.

\section{Conclusions and discussions}
Batch processes exhibit highly dynamic and nonlinear behavior due to their non-steady-state operation. This poses significant challenges for optimization and control. Traditional local SOC methods derive linear models through local analysis, which fail to adequately capture batch process nonlinearities. As a result, local SOC techniques perform poorly when applied to batch processes.

To address these limitations, this paper makes the following key contributions:
\begin{enumerate}
	\item The gSOC methodology is extended to batch processes for the first time. The new gdSOC evaluates controlled variables over the entire batch operating space using nonlinear models. This avoids the approximations of local linearization. 
	\item A proof is given that certain structural constraints on the controlled variable combination matrix can be expressed as linear constraints. This enables simplification of the gdSOC optimization problem.
	\item A novel convex approximation algorithm is proposed to solve gdSOC for batch processes with structural constraints. This approach balances model accuracy and analytical tractability.
\end{enumerate}

Specifically, we retain the nonlinear process model in gdSOC to evaluate controlled variables. We also represent the combination matrix in a vectorized form to enable linear representation of structural constraints. Additionally, we develop a new moderate approximation technique to obtain analytical solutions while preserving the nonlinear model. This paper demonstrates the first application of gdSOC to optimize batch process operations.

This work has explored several aspects of the gdSOC method for batch processes. However, there are still relevant problems that remain unaddressed but could be critical for practical implementation. Future research could focus on the following topics:
\begin{enumerate}
	\item \textbf{General nonlinear combinations of measurements as CVs} 
	Batch processes exhibit strong nonlinearity, and representing controlled variables as linear combinations of measurement variables may limit the potential for improving controlled variable performance. Existing work has introduced techniques such as neural networks and polynomials to design controlled variables \cite{su2022}.
	\item \textbf{Constrained Problem} The current discussion has primarily focused on unconstrained batch process optimization problems. However, in practice, batch processes often have various constraints, including terminal constraints and path constraints. The NCO approximation is one of the self-optimizing methods but can only be applied to scenarios without actively changing constraints \cite{Ye2013a}. In the case of batch processes, switching active constraints is common, so there is a pressing need to develop approaches that can address batch process optimization problems with constraints. Future research can explore methodologies that handle the challenges posed by constrained batch process optimization.
	\item \textbf{Structural uncertainties} 
	{The current algorithm relies on the implicit assumption that the nominal model structure correctly captures the underlying dynamics. We acknowledge that structural uncertainties could degrade the performance of our algorithm if the model provides a poor approximation of the true system. Designing an adaptive algorithm that continuously updates the model structure online based on observed data could be an interesting avenue for future work. Potential approaches could involve integrating reinforcement learning, iterative learning control, or other adaptive techniques to modify the model structure over time based on experience and observed data. This could impact the algorithm with robustness against structural model mismatches. Exploring adaptive model learning represents an intriguing direction to relax the reliance on an accurate initial system model.}
\end{enumerate}

In conclusion, there are promising opportunities for further research in exploring the application of modern regression methods for designing controlled variables in batch processes and developing approaches to tackle the challenges presented by constrained batch process optimization problems. Addressing these areas can significantly contribute to the practical implementation and performance improvement of the gdSOC method for batch processes.


\begin{acknowledgement}
This work is supported by the Research Funds of Institute of Zhejiang University-Quzhou.

\end{acknowledgement}

%
%
%

\bibliography{reference}

\end{document}